\theoremstyle{plain}
\newtheorem{thm}{Theorem}[section]
\newtheorem{lem}[thm]{Lemma}
\newtheorem{prop}[thm]{Proposition}
\theoremstyle{remark}
\newtheorem{rem}[thm]{Remark}
\theoremstyle{definition}
\newtheorem{example}[thm]{Example}
\newcommand{\cC}{\mathcal{C}}
\newcommand{\cQ}{\mathcal{Q}}
\newcommand{\bc}{\mathbb{C}}
\newcommand{\CC}{\mathbb{C}}
\newcommand{\bff}{\mathbb{F}}
\newcommand{\bq}{\mathbb{Q}}
\newcommand{\bz}{\mathbb{Z}}
\newcommand{\ZZ}{\mathbb{Z}}
\newcommand{\NN}{\mathbb{N}}
\newcommand{\bp}{\mathbb{P}}
\newcommand{\p}{\mathfrak{p}}
\DeclareMathOperator{\coker}{coker}
\DeclareMathOperator{\rank}{rank}
\DeclareMathOperator{\MW}{MW}
\DeclareMathOperator{\Proj}{Proj}
\DeclareMathOperator{\length}{length}
\DeclareMathOperator{\codim}{codim}
\numberwithin{equation}{section}
\title{Mordell-Weil groups and Zariski triples}
\author[J.I.~Cogolludo]{Jos\'e Ignacio Cogolludo-Agust{\'\i}n}
\address{Departamento de Matem\'aticas, IUMA, Facultad de Ciencias\\
Universidad de Zaragoza\\
c/ Pedro Cerbuna 12\\
E-50009 Zaragoza SPAIN}
\email{jicogo@unizar.es}
\author[R.~Kloosterman]{Remke Kloosterman}
\address{Institut f\"ur Mathematik, Humboldt-Universit\"at zu Berlin,
Unter den Linden 6, D-10099 Berlin, Germany} 
\email{klooster@math.hu-berlin.de}
\thanks{The authors would  like to thank the referee for several useful suggestions.
The first author is partially supported by the Spanish Ministry 
of Education MTM2010-21740-C02-02. The second author is partially supported by DFG-grant KL 2244/2-1.}
\subjclass[2000]{14H30, 14J30, 14H50, 11G05, 57M12, 14H52}
\begin{document}

\begin{abstract}
We prove the existence of three irreducible curves $C_{12,m}$ of degree 12 with the 
same number of cusps and different Alexander polynomials. This exhibits a Zariski triple.
Moreover we provide a set of generators for the elliptic threefold with constant 
$j$-invariant 0 and discriminant curve $C_{12,m}$. Finally we consider general degree 
$d$ base change of $C_{12d,m}$ and calculate the dimension of the equisingular deformation space.
\end{abstract}

\maketitle

\section*{Introduction}
In this paper we give an example of a Zariski triple. More concretely we construct three irreducible 
plane curves $C_{12,0}, C_{12,1},C_{12,2}$ of degree 12 with 32 ordinary cusps and no further singularities, 
such that the fundamental groups $\pi_1(\bp^2\setminus C_{12,m})$, $m=0,1,2$ are pairwise non-isomorphic.
In order to show that the fundamental groups are pairwise distinct, we do not calculate the fundamental 
groups themselves, but an invariant associated with the fundamental group, namely the Alexander polynomial.

The Alexander polynomial $\Delta(t)$ of a cuspidal curve $C$ is trivial unless the degree of $C$ is of the 
form $6k$, $k\in \NN$. In the latter case it has the form
$$
\Delta(t)=(t^2-t+1)^m.
$$

For the case $k=1$ the Alexander polynomials of such curves are completely understood, namely 
$$
m=
\begin{cases}
0 & \text{if } \#\Sigma \leq 5\\
\#\Sigma -6 & \text{if } \#\Sigma \geq 7,
\end{cases}
$$
where $\Sigma$ is the set of cusps. However, if $\#\Sigma=6$, $m$ equals either 1 or 0 depending on whether 
the six cusps are on a conic or not. This was noted by Zariski {}\cite{Zariski}. 
The two types of sextics with 6 cusps form a so-called Zariski pair, i.e., the combinatorial data of both 
curves coincides, but their complements are not homeomorphic.

For the case $k=2$ there is not such a simple description of $m$ in terms of the number of cusps.
We prove the following result, exhibiting a Zariski triple:
\begin{thm} 
There exist degree 12 curves $C_{12,m}$ with {}$m\in \{0,1,2\}$ with precisely 32 ordinary cusps and no further 
singularities such that the degree of the Alexander polynomial of $C_{12,m}$ equals $2m$.
\end{thm}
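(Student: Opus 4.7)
The plan is to invoke the Libgober--Loeser--Vaqui\'e characterization of the Alexander polynomial of a cuspidal plane curve: for an irreducible curve $C\subset\bp^2$ of degree $6k$ whose only singularities are ordinary cusps $\Sigma$, the degree of $\Delta_C(t)$ equals $2\,h^1(\bp^2,\mathcal{J}_\Sigma(5k-3))$, where $\mathcal{J}_\Sigma$ is the ideal sheaf of $\Sigma$ as a reduced zero-dimensional subscheme. For $k=2$ this is twice the superabundance of $\Sigma$ with respect to plane septics. Since $h^0(\mathcal{O}_{\bp^2}(7))=36$ and $|\Sigma|=32$, the dimension of septics through $\Sigma$ is at least $4$, and the three target values $m=0,1,2$ correspond to actual dimensions $4$, $5$, and $6$.

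The task therefore reduces to producing, for each $m\in\{0,1,2\}$, an irreducible degree-$12$ curve $C_{12,m}$ with exactly $32$ ordinary cusps $\Sigma_m$ and with $h^0(\mathcal{J}_{\Sigma_m}(7))=4+m$. A natural source of such curves is the family of quasi-toric curves: if $C$ admits a relation $f_a^2+f_b^3+f_c^\ell=0$ with $(2,3,\ell)$ a Brieskorn triple and $2a=3b=\ell c=12$, cusps appear automatically at the common base locus, and different quasi-toric types impose different rank deficiencies on the septic linear system. A complementary construction would pull back a Zariski-type six-cuspidal sextic by a carefully chosen degree-$2$ rational self-map of $\bp^2$, producing a degree-$12$ curve whose cusp set inherits controlled superabundance from the sextic. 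The guiding principle is to adjust the curve configuration so as to realize the three possible values of $m$ while keeping the cusp count fixed at $32$.

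The main obstacle is the explicit verification of superabundance in each case. The lower bound $h^0(\mathcal{J}_{\Sigma_m}(7))\geq 4+m$ can be established by exhibiting $m$ additional septics through $\Sigma_m$, for instance as products of specific cubics and quartics passing through prescribed subsets of the cusps. The matching upper bound is harder and will likely require either a direct cohomological computation or the Mordell--Weil perspective announced in the abstract: translating superabundance into the rank of the isotrivial elliptic threefold $y^2=x^3+F$ with $F$ a defining equation of $C_{12,m}$, and bounding the rank by exhibiting a complete set of generating sections. Distinguishing $m=1$ from $m=2$ is expected to be the most delicate step. Proving irreducibility and the absence of singularities beyond the $32$ ordinary cusps will require a local case-by-case analysis at each base point together with a degree argument on the candidate components.
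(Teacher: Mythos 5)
Your framework is correct and is essentially the paper's: the superabundance formulation $\deg\Delta = 2h^1(\bp^2,\mathcal{J}_{\Sigma}(7))$ is equivalent to the Zariski cokernel formula the paper works with, the reduction to producing curves with $h^0(\mathcal{J}_{\Sigma_m}(7))=4+m$ is the right one, and your two construction ideas (quasi-toric relations, pullbacks of a six-cuspidal sextic of Zariski type under a self-map given by degree-2 forms) point in the same direction as the paper, which builds all three curves as order-2 Kummer covers of a six-cuspidal sextic $C_{6,6}$ with cusps not on a conic (for $m=0,1$) and of an eight-cuspidal torus-type sextic $C_{6,8}$ (for $m=2$). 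However, the proposal stops at exactly the two places where the actual mathematical content lies, so it has genuine gaps rather than being a different route. First, the cusp count: a Kummer cover of order 2 has topological degree 4, so a generic pullback of a six-cuspidal sextic has only $24$ cusps, not $32$. The missing geometric idea is to choose the ramification lines among bitangents and flex lines of $C_{6,6}$: above each point where a ramification line is simply tangent to the sextic at an inflexion point, the local equation $y=x^3$ pulls back to $y_1^2=x^3$, creating a cusp, and the four tangency points (each with two preimages) supply the remaining $8$ cusps. Your proposal never explains how its construction reaches exactly $32$ cusps, and without this the cases $m=0,1$ do not even get off the ground; only the $m=2$ case (generic pullback of the $8$-cuspidal sextic, $8\times 4=32$) works as stated.

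Second, and more seriously, the upper bounds. Your suggested method --- ``bounding the rank by exhibiting a complete set of generating sections'' --- cannot work for $m=0$: rank $\MW=0$ means there are no sections to exhibit, and proving that an exhibited set of sections is complete is precisely as hard as the original problem. The paper's actual upper-bound arguments are of a different nature: for $m=2$, a purely numerical syzygy analysis (via Proposition~\ref{prpRnkSyz}) pins down the minimal free resolution of the ideal of the $8$ cusps of $C_{6,8}$, and this resolution pulls back under the generic Kummer cover, giving exactly two degree-$10$ syzygies and hence $\deg\Delta=4$; for $m=1$, a direct computer-algebra (Singular) computation of the resolution of the cusp ideal; and for $m=0$, where the field of definition of the cusps is too large for any direct computation, a reduction modulo a prime $\p$ of $\overline{\bq}$ lying over $457$ (the smallest prime for which the whole configuration descends to the prime field $\bff_{457}$), together with the semicontinuity argument $\dim_{\bc} I_7=\dim\overline{I_7}\le\dim\tilde{I}_7=4$, where $\tilde{I}$ is the cusp ideal of the reduced curve. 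Nothing in your proposal substitutes for these steps; relatedly, you misplace the difficulty: distinguishing $m=1$ from $m=2$ is handled by the same resolution computation, while the genuinely delicate case is $m=0$, where one must prove that $32$ points in very special position nevertheless impose independent conditions on septics.
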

This result is a consequence of Propositions~\ref{propDeg0},~\ref{propDeg2}, and~\ref{propDeg4}.

In the case of degree 12 cuspidal curves it is known that if $C$ has at most 23 cusps, then $\Delta(t)=1$. 
From~\cite{KloSyz} it follows that if $C$ has at most 28 cusps, then $m\leq 1$. However, we are not aware of any 
example of a plane curve with either 29, 30 or 31 cusps such that $m\geq 2$. For this reason degree 12 curves 
with 32 cusps seems to be the easiest instance to exhibit a Zariski triple.

Let $f\in \CC[u,v,w]_{6k}$ be a polynomial. A quasi-toric relation of type $(2,3,6)$ is a triple $(h_1,h_2,h_3)$ 
of polynomials such that $h_1^2+h_2^3+fh_3^6=0$. Quasi-toric relations are in one-to-one correspondence with 
$\CC(u,v)$-rational points on the elliptic curve $y^2=x^3+f(u,v,1)$ over $\CC(u,v)$, and hence in one-to-one 
correspondence with rational sections of the elliptic threefold $y^2=x^3+f$ in $\bp(1,1,1,2k,3k)$. The latter 
two sets have a natural group structure.

For a general $k$, there is a complete description of $m$ as follows. 

\begin{thm}[{\cite{CogLib}}]
\label{thm-mw-qt}
The Alexander polynomial of an irreducible curve $C=\{ f=0\}\subset \bp^2$ with only cusps and nodes as 
singularities is non-trivial if and only if there exist three polynomials $h_i\in \CC[u,v,w]$, $i=1,2,3$ such that
$$
h_1^2 + h_2^3 + f h_3^6 = 0.
$$

I.e., the Mordell-Weil group of the elliptic threefold $y^2=x^3+f$ is not trivial.
Moreover, the rank of the Mordell-Weil group equals $2m$, the degree of $\Delta(t)$.
\end{thm}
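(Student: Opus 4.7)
The plan is to prove the theorem by identifying both invariants, the degree of the Alexander polynomial and the rank of the Mordell--Weil group of $y^2=x^3+f$, with a common cohomological quantity attached to the cusp set $\Sigma$. The starting point is Libgober's theorem on Alexander polynomials of plane curves whose only singularities are nodes and cusps: the local Alexander polynomial at a cusp is $t^2-t+1$, nodes contribute trivially, and the global Alexander polynomial is therefore of the form $(t^2-t+1)^m$. Moreover, Libgober's formula identifies $m$ with the superabundance
\[
m=h^1\bigl(\bp^2,\mathcal{J}_\Sigma(5k-3)\bigr),
\]
that is, with the defect of the linear system of curves of degree $5k-3$ vanishing at $\Sigma$ with respect to the expected dimension. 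This is the first ingredient I would recall (referring to Libgober's original work) since it reduces the left-hand side of the equivalence to a purely linear-algebraic condition on polynomials of degree $5k-3$.

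Next, I would identify the Mordell--Weil group of the elliptic threefold $\mathcal{E}:y^2=x^3+f$ with the set of quasi-toric relations. A triple $(h_1,h_2,h_3)$ with $h_1^2+h_2^3+fh_3^6=0$ defines a rational map $\bp^2\dashrightarrow\mathcal{E}$ by $[u:v:w]\mapsto[u:v:w:-h_2/h_3^{2k}:h_1/h_3^{3k}]$ in the weighted coordinates of $\bp(1,1,1,2k,3k)$; conversely any $\CC(u,v)$-rational point on the Weierstrass equation can be cleared of denominators to produce such a triple, with $h_1,h_2,h_3$ well-defined up to the natural $\CC^\ast$-action rescaling each coordinate and up to the common zeroes forced by $f$. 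This is a bijection compatible with group structures once one quotients the set of relations by the obvious trivial ones. Because $j=0$, the generic fibre admits complex multiplication by $\ZZ[\zeta_3]$, so the Mordell--Weil group is automatically a $\ZZ[\zeta_3]$-module and its $\ZZ$-rank is forced to be even; this is the structural reason why $\deg\Delta$ is even.

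The crux is to match the two invariants, and I expect this to be the main obstacle. The idea is to associate to each $g\in H^0(\bp^2,\mathcal{J}_\Sigma(5k-3))$ that is \emph{superabundant} (i.e., contributes to $H^1$ rather than being explained by sections of $\mathcal{O}(5k-3)$ coming from the expected codimension count) a quasi-toric relation: locally at each cusp, $f$ admits an expression $u^2+v^3$ in suitable coordinates, and a polynomial vanishing on $\Sigma$ can be resolved into the data $(h_1,h_2)$ via a local Koszul-type construction, with the degree-$5k-3$ assumption matching the weights $3k$ and $2k$ of $h_1$ and $h_2$ modulo the degree-$k$ of $h_3$. Conversely, given a quasi-toric relation, reducing $h_1$ or $h_2$ modulo the other yields a polynomial in $H^0(\mathcal{J}_\Sigma(5k-3))$ which is superabundant precisely because the relation cannot be trivialized. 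Formalizing this back-and-forth is most cleanly done by analysing the spectral sequence of the cyclic triple cover $z^6=f$ branched along $C$, or equivalently the Leray spectral sequence for $\mathcal{E}\to\bp^2$; either approach identifies $\MW(\mathcal{E})\otimes\CC$ with the eigenspace $H^1(\bp^2,\mathcal{J}_\Sigma(5k-3))\oplus\overline{H^1(\bp^2,\mathcal{J}_\Sigma(5k-3))}$ of the covering automorphism, yielding $\rank\MW(\mathcal{E})=2m$ and completing the proof.
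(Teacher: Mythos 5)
First, a point of comparison: the paper does not prove this statement at all. It is quoted from \cite{CogLib} and used as a black box, so there is no internal proof to measure your attempt against; your proposal has to stand as a reconstruction of the cited argument. Your first three ingredients are sound and do match the standard setup: Libgober's superabundance formula $m=h^1\bigl(\bp^2,\mathcal{J}_\Sigma(5k-3)\bigr)$ (equivalently the cokernel formula (\ref{eqnCok})), the correspondence between $\CC(u,v)$-points of $y^2=x^3+f$ and quasi-toric relations obtained by clearing denominators, and the evenness of the rank coming from the $\ZZ[\xi]$-module structure induced by complex multiplication.

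The gap is in your final paragraph, which is where the entire content of the theorem lies: the identification $\MW(\mathcal{E})\otimes\CC\cong H^1(\mathcal{J}_\Sigma(5k-3))\oplus\overline{H^1(\mathcal{J}_\Sigma(5k-3))}$ is asserted, not derived, and neither mechanism you offer produces it. A superabundant section of $\mathcal{J}_\Sigma(5k-3)$ is a single polynomial, and the sketched ``local Koszul-type construction'' gives no actual procedure assembling from it three global polynomials $h_1,h_2,h_3$ of the right degrees satisfying one global equation; this direction (nontrivial $\Delta$ implies existence of a quasi-toric relation) is precisely the hard implication. Likewise, the Leray spectral sequence of $\mathcal{E}\to\bp^2$ computes $H^*(\mathcal{E})$, not the Mordell--Weil group; extracting the rank from cohomology requires a Shioda--Tate-type theorem for elliptic threefolds together with an analysis of the singular Weierstrass model (this is the route of \cite{ell3HK} and \cite{Rams}, which the paper itself uses in its section on deformations). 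The idea that actually makes your cyclic-cover strategy work is isotriviality: since $j\equiv 0$, after the base change $z^6=f$ (note this is a $6$-fold cover $V_6$, not a triple cover as you write) the fibration trivializes, so $\mathcal{E}$ is birational to $(V_6\times E)/\mu_6$ with $E:y^2=x^3+1$. Then $\MW\otimes\bq\cong\Hom_{\mu_6}(\mathrm{Alb}(V_6),E)\otimes\bq$, and since $\Hom(E,E)\otimes\bq=\bq(\xi)$, this group has rank twice the dimension of the relevant $\mu_6$-eigenspace of $H^{1,0}(V_6)$, which Zariski--Libgober theory identifies with $H^1(\mathcal{J}_\Sigma(5k-3))$. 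In particular, positive superabundance forces a nonconstant equivariant map $V_6\to E$ through the Albanese, and writing that map in coordinates is exactly what produces the quasi-toric relation. Without this Albanese step (or the Shioda--Tate alternative), your sketch establishes only the easy implication and the evenness of the rank.
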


In the first two sections we calculate the degree of $\Delta(t)$. For this we use the following result due to Zariski: 

\begin{thm}[Zariski]
The degree of the Alexander polynomial of an irreducible cuspidal curve of degree $6k$, with cuspidal locus $\Sigma$, 
equals
\begin{equation}
\label{eqnCok} 
2\dim \coker \left(\bc[u,v,w]_{5k-3} {}\stackrel{\oplus ev_p}{\longrightarrow}  \oplus_{p \in \Sigma} \bc\right).
\end{equation}
\end{thm}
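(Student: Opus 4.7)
\medskip

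\noindent\textbf{Proof plan.}
The plan is to reduce the computation of $\deg\Delta(t)$ to a linear algebra problem on the cusp locus, via the Hodge theory of a cyclic cover. Since $C$ is irreducible of degree $6k$ with only cusps, Libgober's divisibility forces $\Delta(t)=(t^2-t+1)^m$, so it suffices to compute $m$. The strategy is to identify $m$ with the superabundance $h^1(\bp^2,\mathcal{J}_\Sigma(5k-3))$ and then translate this into the cokernel of the evaluation map.

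First, I would pass to the cyclic cover $\pi:Y\to\bp^2$ of degree $6$ branched along $C$ (after a log-resolution at the cusps). The factor $(t^2-t+1)^m$ of $\Delta(t)$ corresponds to the eigenspaces of the deck transformation $\sigma$ acting on $H^1$ of the complement with eigenvalues the primitive sixth roots of unity; cusps are the only local singularities contributing characters of order exactly $6$. Hence $m$ equals the dimension of the $\zeta_6$-eigenspace on the first cohomology of (an appropriate smooth model of) $Y$.

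Next, using the Esnault--Viehweg description of the eigensheaf decomposition $\pi_*\cO_Y=\bigoplus_{i=0}^{5}L^{-i}$ with $L=\cO_{\bp^2}(k)$, together with the local contribution at each cusp (where the stalk of the eigensheaf for the character $5/6$ is exactly the maximal ideal), I would identify the $\zeta_6$-eigenspace of $H^1(Y,\bc)$ with $H^1(\bp^2,\mathcal{J}_\Sigma\otimes L^{-5}\otimes\omega_{\bp^2}^{-1})$; twisting gives $H^1(\bp^2,\mathcal{J}_\Sigma(5k-3))$. This is the step where the precise value $5k-3$ arises, and this Hodge-theoretic identification is the main obstacle: one must check that nodes (or any smooth point of $C$) do not contribute and that the local type of a cusp produces exactly the ideal sheaf twist claimed.

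Finally, from the short exact sequence
\begin{equation*}
0\longrightarrow \mathcal{J}_\Sigma(5k-3)\longrightarrow \cO_{\bp^2}(5k-3)\longrightarrow \cO_\Sigma\longrightarrow 0,
\end{equation*}
together with the vanishing $H^1(\bp^2,\cO(5k-3))=0$ (valid for $5k-3\geq 0$, i.e.\ $k\geq 1$, which is the only relevant range), the long exact sequence yields $h^1(\mathcal{J}_\Sigma(5k-3))=\dim\coker\bigl(\bc[u,v,w]_{5k-3}\to \oplus_{p\in\Sigma}\bc\bigr)$. Combining with the previous step gives $\deg\Delta(t)=2m=2\dim\coker$, as claimed.
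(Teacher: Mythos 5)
The paper does not actually prove this statement: it is quoted with attribution to Zariski \cite{Zariski} and used as a black box (in the reformulation (\ref{rankFormula}) of Section 2), so there is no internal proof to compare against and your proposal has to be judged on its own merits. Your plan follows what is in fact the standard route, close in spirit to Zariski's original argument: relate $\deg \Delta(t)$ to the irregularity of the $6$-fold cyclic cover branched along $C$, compute that irregularity through an eigensheaf decomposition, and identify the only non-trivial contribution with the superabundance $h^1(\bp^2,\mathcal{J}_\Sigma(5k-3))$. Your final step, converting this $h^1$ into the cokernel of the evaluation map via the ideal-sheaf sequence, is correct and complete (note that $H^1(\bp^2,\cO(d))=0$ for \emph{every} integer $d$, so no positivity caveat is needed).

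Two points remain genuine gaps. First, the bridge between $\deg\Delta(t)$ and the eigenspaces of the deck transformation on $H^1$ of a smooth model of the cover is asserted rather than proved; one needs Libgober's or Sakuma's comparison between $H_1$ of the infinite cyclic cover of the complement and $H^1$ of the resolved branched cover (or Zariski's own computation), including the Hodge-symmetry bookkeeping which makes the two conjugate eigenvalues of the monodromy contribute equally and thereby produces the factor $2$ in the statement. Second, your displayed eigensheaf is wrong as written: $\mathcal{J}_\Sigma\otimes L^{-5}\otimes\omega_{\bp^2}^{-1}=\mathcal{J}_\Sigma(3-5k)$, which is not $\mathcal{J}_\Sigma(5k-3)$; the sheaf that actually computes the relevant eigenspace is $\mathcal{J}_\Sigma\otimes L^{\otimes 5}\otimes\omega_{\bp^2}=\mathcal{J}_\Sigma(5k-3)$, and the discrepancy is exactly the Serre/Hodge duality step (the $\zeta$-eigenspace of $H^{0,1}$ is dual to the $\bar\zeta$-eigenspace of $H^{1,0}$) that your sketch glosses over. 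Since this is the step you yourself flag as ``the main obstacle,'' the architecture is sound but the attempt is a plan rather than a proof: the local computation at a cusp (log canonical threshold $5/6$, multiplier ideal equal to the maximal ideal precisely for the character $5/6$, triviality for the lower characters) and the global eigenspace identification both still need to be carried out.
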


Using this result it is easy to construct an example with 32 cusps and $\deg \Delta(t)\geq 4$, namely we 
pull back a degree 6 curve with 8 cusps under a general degree 4 map $\kappa_2:\bp^2\to \bp^2$. The obtained curve 
has degree 12 and 32 cusps. As an immediate consequence of Theorem~\ref{thm-mw-qt}, the degree of the Alexander 
polynomial cannot decrease under base change, therefore, we obtain a degree 12 cuspidal curve $C_{12,2}$ with 32 
cusps whose Alexander polynomial has degree at least~4.

To find a Zariski triple it suffices then to construct two degree 12 cuspidal curves with 32 cusps whose Alexander 
polynomial has degree 0 and 2. The construction of both these curves is more involved. We start with a sextic 
$C_{6,6}$ with 6 cusps not on a conic. Then we take two very special degree four covers $\bp^2\to \bp^2$ such 
that the inverse image of $C_{6,6}$ has precisely 32 cusps. 
To show that the degree of the Alexander polynomial is either 0 or 2 one has to calculate (\ref{eqnCok}). In one 
of the two examples this can be done directly by using a computer algebra package. This yields an example  where 
the  degree of the Alexander polynomial is two. In the second example this approach is not feasible since the 
minimal field of definition of the ideal of the cusps is too large. 

To actually compute the degree of the Alexander polynomial we have first to reduce the curve modulo a prime $\p$ of 
$\overline{\bq}$, so that the ideal of the cusps can be defined over the prime field $\bff_p$ of the residue field 
of $\p$. We show that in our example this is the case for a prime $\p$ lying over $457$. We then show that in our 
case reducing modulo $\p$ only increases  the dimension of (\ref{eqnCok}).  We calculate the dimension of the 
co-kernel and finally obtain that the degree of the Alexander polynomial is zero. It turns out that for our particular 
example the smallest suitable prime $\p$ is 457.
 
In the third section we give generators for the Mordell-Weil group of $y^2=x^3+f$. 
By~\cite{CogLib} the rank of this group equals the degree of $\Delta(t)$.

Let $C_{12d,m}$ be the pullback of $C_{12,m}$ under a general degree $d$ map $\kappa:\bp^2\to\bp^2$. 
Then $C_{12d,m}$ has $32d^2$ cusps. Let $\mathcal{C}_{12d,m}$ be the equisingular deformation space of $C_{12d,m}$. 
In the final section we show that
\begin{thm} The codimension of $\mathcal{C}_{12d,m}$ in $\CC[u,v,w]_{12d}$ equals
\[ 64d^2-m(2d-1)(d-1).\]
\end{thm}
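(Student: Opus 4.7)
The tangent space to $\mathcal{C}_{12d,m}$ at a defining equation $f$ of $C_{12d,m}$ is the degree-$12d$ part of the Jacobian ideal $J_f=(\partial_u f,\partial_v f,\partial_w f)$, as the Tjurina and Milnor numbers of each $A_2$-singularity both equal $2$. From the short exact sequence $0\to\mathcal{J}_f(12d)\to\mathcal{O}_{\bp^2}(12d)\to\mathcal{O}_Z\to 0$ with $Z$ the Jacobian subscheme of length $64d^2$, one obtains $\operatorname{codim}\mathcal{C}_{12d,m}=64d^2-h^1(\bp^2,\mathcal{J}_f(12d))$, reducing the claim to $h^1(\mathcal{J}_f(12d))=m(2d-1)(d-1)$.

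Exploiting $f=\kappa^*f_{12,m}$ and the étaleness of $\kappa$ at each cusp of $C_{12d,m}$ (for generic $\kappa$), we have $\mathcal{J}_f=\kappa^*\mathcal{J}_{f_{12,m}}$; together with $\mathcal{O}_{\mathrm{src}}(12d)=\kappa^*\mathcal{O}_{\mathrm{tgt}}(12)$ and $R^j\kappa_*=0$ for $j>0$ (finiteness of $\kappa$), the projection formula yields
\[
h^1(\bp^2_{\mathrm{src}},\mathcal{J}_f(12d))=h^1\bigl(\bp^2_{\mathrm{tgt}},\mathcal{J}_{f_{12,m}}(12)\otimes\kappa_*\mathcal{O}_{\bp^2_{\mathrm{src}}}\bigr).
\]
Computing the Hilbert series of the graded module $\bigoplus_k\CC[u,v,w]_{kd}$ over $\CC[F_1,F_2,F_3]$ (where $\kappa=(F_1{:}F_2{:}F_3)$) gives $\bigl(1+\tfrac{(d-1)(d+4)}{2}t+\tfrac{(d-1)(d-2)}{2}t^2\bigr)/(1-t)^3$, so for generic $\kappa$ the bundle $\kappa_*\mathcal{O}$ splits as
\[
\kappa_*\mathcal{O}\cong \mathcal{O}\oplus\mathcal{O}(-1)^{(d-1)(d+4)/2}\oplus\mathcal{O}(-2)^{(d-1)(d-2)/2}.
\]

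Substituting, the right-hand $h^1$ becomes a linear combination of $h^1(\mathcal{J}_{f_{12,m}}(n))$ for $n\in\{10,11,12\}$. The $d=1$ instance of the theorem forces $h^1(\mathcal{J}_{f_{12,m}}(12))=0$, and the $m$ essentially distinct quasi-toric relations $(H_1^{(j)},H_2^{(j)},1)$ from Section~3 (with $\deg H_1^{(j)}=6,\ \deg H_2^{(j)}=4$) yield, via differentiation, non-Koszul syzygies of $(\partial_u f_{12,m},\partial_v f_{12,m},\partial_w f_{12,m})$ which contribute $h^1(\mathcal{J}_{f_{12,m}}(11))=m$ at degree $11$ and, after multiplication by linear forms, $h^1(\mathcal{J}_{f_{12,m}}(10))=3m$ at degree $10$. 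The sum equals $\tfrac{(d-1)(d+4)}{2}m+\tfrac{(d-1)(d-2)}{2}\cdot 3m=m(d-1)(2d-1)$, as claimed. The principal obstacles are (a) the splitting of $\kappa_*\mathcal{O}$ for generic $\kappa$ (vector bundles on $\bp^2$ need not split a priori, even when the Hilbert series matches a split form) and (b) the MW-theoretic evaluations $h^1(\mathcal{J}_{f_{12,m}}(11))=m$ and $h^1(\mathcal{J}_{f_{12,m}}(10))=3m$, both of which require a careful control of the minimal free resolution of $J_{f_{12,m}}$ in terms of the quasi-toric syzygies coming from the Mordell--Weil generators.
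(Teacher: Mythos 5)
Your proposal addresses only half of the theorem, and the half it addresses is the one the paper proves in Proposition~\ref{prpTDim}. What you compute is the codimension of the \emph{tangent space} $T=H^0(\bp^2,\mathcal{J}_f(12d))$ to $\mathcal{C}_{12d,m}$ (strictly speaking one must use the saturation of the Jacobian ideal, i.e.\ the Tjurina ideal sheaf, as the paper does and as your sheaf-theoretic formulation implicitly does), and you then silently identify $\codim\mathcal{C}_{12d,m}$ with $\codim T$. That identification is precisely what has to be proved. Since $\dim\mathcal{C}_{12d,m}\le\dim T$, a tangent-space computation can only ever give $\codim\mathcal{C}_{12d,m}\ge 64d^2-m(2d-1)(d-1)$; equality amounts to smoothness of the equisingular stratum at $C_{12d,m}$, which is not automatic here. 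Indeed, for $m>0$ and $d>1$ one has $h^1(\mathcal{J}_f(12d))=m(2d-1)(d-1)\neq 0$, which is exactly the situation in which the T-smoothness criterion of \cite{GL} is unavailable and equisingular strata can be obstructed. The paper devotes the entire second half of its proof to the reverse inequality, by exhibiting actual equisingular families of the required dimension: the trivial bound $\codim\le 2\#\Sigma$ for $m=0$, the family of curves $g^2+h^3$ with $g\in S_{6d}$, $h\in S_{4d}$ (plus a count of the conditions imposed by the extra cusps) for $m=1$, and the component built from two independent quasi-toric relations in Lemma~\ref{lemdimcountDeg4} for $m=2$. Nothing in your proposal plays this role.

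Within the tangent-space half, your obstacle (b) is also unresolved, and partly for a directional reason: the inequality one extracts from the tangent space requires \emph{upper} bounds $h^1(\mathcal{J}_{f_{12,m}}(11))\le m$ and $h^1(\mathcal{J}_{f_{12,m}}(10))\le 3m$, whereas exhibiting non-Koszul syzygies coming from the Mordell--Weil generators can only give \emph{lower} bounds; moreover, deducing $h^1(\mathcal{J}_{f_{12,m}}(12))=0$ from ``the $d=1$ instance of the theorem'' is circular, since the $d=1$ case is part of the statement and itself requires exactly this vanishing. The correct source for all three values is the paper's lemma that the minimal free resolution of $J(C_{12,m})$ has exactly $m$ syzygies in degree $14$, none in degree $13$, and all remaining generators and syzygies in degree $\le 12$; its proof uses the Mordell--Weil identity via \cite{ell3HK} and \cite{Rams}, an asymptotic Libgober/Shioda--Tate argument to cap the syzygy degrees, and Singular computations \cite{DGPS} for $m=0,1$. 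Granting that lemma, your claimed values are correct, and your route --- projection formula together with the splitting $\kappa_*\mathcal{O}\cong\mathcal{O}\oplus\mathcal{O}(-1)^{(d-1)(d+4)/2}\oplus\mathcal{O}(-2)^{(d-1)(d-2)/2}$, which does hold for generic $\kappa$ (verify it for the Kummer map $[u:v:w]\mapsto[u^d:v^d:w^d]$ and use that direct sums of line bundles on $\bp^2$ are rigid, because $H^1(\bp^2,\mathcal{O}(n))=0$ for all $n$) --- is an elegant alternative to the paper's pullback-of-resolutions bookkeeping. But as it stands, the proposal proves at most the inequality $\codim\mathcal{C}_{12d,m}\ge 64d^2-m(2d-1)(d-1)$, and even that only modulo the resolution lemma.
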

In particular, if $m>0$ and $d>1$ then the codimension is smaller than expected.

\section{Construction of the curves}
In this section we construct three curves of degree 12 with exactly 32 ordinary cusps. We start by constructing 
three configurations of curves each involving one sextic and three lines. We obtain the four degree 12 curves as 
degree four covers of these sextics ramified along the lines.

Consider the following two sextic curves: 
\begin{enumerate}
 \item\label{66} A sextic $\cC_{6,6}$ with exactly six cusps and whose inflexion points are such that there exist 
two bitangents $\ell_1$ and $\ell_2$ each one intersecting $\cC_{6,6}$ exactly at two inflexion points.
 \item\label{68} A sextic $\cC_{6,8}$ with exactly eight cusps.
\end{enumerate}

The existence of $\cC_{6,8}$ can be found in~\cite[Table 1, nt15]{oka-pho-class}. For the existence of $\cC_{6,6}$
consider the (smooth) Fermat cubic $u^3+v^3+w^3$. Let $\xi$ be a primitive sixth root of unity. The inflexion points and 
tangencies of this cubic are shown in Table~\ref{tabInFlex}. Namely, the points $p_{i,j}$ denote the 9 inflexion points
of the Fermat cubic. The lines $t_{i,j}$ are tangent to such a cubic at $p_{i,j}$. Finally, the rows are arranged so
that the lines $t_{i,1}, t_{i,2}, t_{i,3}$ are concurrent for each $i=1,2,3$. The first column describes the intersection
of such concurrent tangent lines.

\begin{table}[h]\label{tabInFlex}
\caption{} 
\centering 
{\small{
\begin{tabular}{|c|c||c||c|}
\hline

\multirow{2}{*}{$\cap t_{1,j}=\{[0:1:0]\}$} 
&$p_{1,1}:=[1:0:-1]$ & $p_{1,2}:=[\xi
:0:-1]$ & $p_{1,3}:=[\xi^2:0:-1]$
\\
\cline{2-4}
&$t_{1,1}:=\{u+w\}$ & $t_{1,2}:=\{u+\xi w\}$ & $t_{1,3}:=\{u+\xi^2 w\}$
\\
\hline\hline

\multirow{2}{*}{$\cap t_{2,j}=\{[0:0:1]\}$} 
&$p_{2,1}:=[1:-1:0]$ & $p_{2,2}:=[\xi:-1:0]$ & $p_{2,3}:=[\xi^2:-1:0]$
\\
\cline{2-4}
&$t_{2,1}:=\{u+v\}$ & $t_{2,2}:=\{u+\xi v\}$ & $t_{2,3}:=\{u+\xi^2 v\}$
\\
\hline\hline

\multirow{2}{*}{$\cap t_{3,j}=\{[1:0:0]\}$} 
&$p_{3,1}:=[0:1:-1]$ & $p_{3,2}:=[0:\xi:-1]$ & $p_{3,3}:=[0:\xi^2:-1]$
\\
\cline{2-4}
&$t_{3,1}:=\{v+w\}$ & $t_{3,2}:=\{v+\xi w\}$ & $t_{3,3}:=\{v+\xi^2 w\}$
\\
\hline

\end{tabular}
}}
\label{tab:x3y3z3}
\end{table}

Consider the Kummer cover $\kappa_2$ of order 2 ramified along $t_{1,1}$, $t_{1,2}$, and $t_{2,1}$,
that is, $[u:v:w]\mapsto [t_{1,1}^2:t_{1,2}^2:t_{2,1}^2]$. The preimage of $\cC_3$ under $\kappa_2$
is $\cC_{6,6}$ a sextic with six cusps which are the preimages of the inflexion points $P_{1,1}$,
$P_{1,2}$, and $P_{2,1}$. Since $t_{1,1}$, $t_{1,2}$, and $t_{1,3}$ are concurrent lines at a point
$[0:1:0]$ which is totally ramified (i.e. it has only one preimage), the preimage of $t_{1,3}$ 
decomposes in a product of two lines, say $\ell_1$ and $\ell_2$. Also note that $\ell_1$ and $\ell_2$ 
are bitangent lines through the inflexion points in the preimage of $P_{1,3}$.

\subsection{Construction of $C_{12,0}$}\label{subsec-661}
Consider the curve $\cC_{6,6}$ given in~\eqref{66}. The line $\ell_1$ is a bitangent through two 
inflexion points. A straightforward calculation shows that the six cusps of the sextic $\cC_{6,6}$ do 
not lie on a conic. Moreover, the combinatorics of $\cC_{6,6} \cup \ell_1$ cannot be obtained using any 
sextic of torus type (that is, with six cusps on a conic).

The curve $\cC_{6,6}$ contains 20 inflexion points, four of which (the preimages of $P_{1,3}$) 
belong to the bitangents $\ell_1$ and $\ell_2$. Out of the remaining 16 inflexion points select 
two and their tangent lines, say $t_1$ and $t_2$. Now consider a Kummer covering of order 2 ramified 
along $\ell_1$, $t_1$, and $t_2$. Note that the pull-back of $\cC_{6,6}$ by such a covering is a curve 
of degree 12, say $\cC_{12,0}$, with 32 cusps (24 of which come from the preimages of the 6 cusps and 
the remaining 8 are the preimages of the inflexion points which are tangent to the ramification 
lines $\ell_1$ and $t_1$, and $t_2$).

\subsection{Construction of $C_{12,1}$}\label{subsec-662}
Now consider a Kummer covering of order 2 ramified along $\ell_1$, $\ell_2$, and a generic line.
Note that the pull-back of $\cC_{6,6}$ by such a covering is a curve of degree 12, say $\cC_{12,1}$, 
with 32 cusps (24 of which come from the preimages of the 6 cusps and the remaining 8 are the 
preimages of the inflexion points which are tangent to the ramification lines $\ell_1$ and~$\ell_2$).

\subsection{Construction of $C_{12,2}$}\label{subsec-68}
Described in~\cite[Table 1, nt15]{oka-pho-class}, the sextic $\cC_{6,8}$ is a torus type curve with 
6 cusps on a conic. Therefore, by the argument mentioned above, it cannot contain a bitangent to 
two inflexion points. Such a curve must be self-dual and hence it contains exactly 8 inflexion points.

Consider a Kummer covering of order 2 ramified along three generic lines. Note that the pull-back 
of $\cC_{6,8}$ by such a covering is a curve of degree 12, say $\cC_{12,2}$, with 32 cusps (coming 
from the preimages of the 8 cusps).

\section{Calculation of the degree of the Alexander polynomials}
Let $S=\bc[u,v,w]$. Let $f\in S_{6k}$ and denote  $C=V(f(u,v,w))\subset \bp^2$. By \cite{CogLib} the degree of the 
Alexander polynomial of $C$ equals the Mordell-Weil rank of the elliptic 
threefold $y^2=x^3+f$. In this section we discuss a method to calculate the Mordell-Weil rank of such threefolds, 
following~\cite{KloSyz}.
Assume that 
$C$ is a reduced curve with only ordinary cusps as singularities. Let $\Sigma$ be the set of cusps of $C$, 
and $I$ the ideal of $\Sigma$. Zariski \cite{Zariski} proved that if $C$ is an irreducible cuspidal plane curve 
then the degree of the Alexander 
polynomial of $C$ (equivalently the Mordell-Weil rank of $y^2=x^3+f$) equals
\begin{equation}
\label{rankFormula} 
2\dim \coker \left( S_{5k-3}\stackrel{\oplus ev_p}{\longrightarrow} \oplus_{p\in \Sigma} \bc\right).
\end{equation}

Equivalently, we have that the Mordell-Weil rank equals twice $h_I(5k-3)-\#\Sigma$, where $h_I$ is the Hilbert 
function of $I$. One can express $h_I(5k-3)$ in terms of the degrees of the generators and syzygies of $I$. 
This is done in \cite{KloSyz}. In some cases the description in terms of syzygies is more useful than the 
description in terms of linear systems. In \cite[Lemma 4.3]{KloSyz} the following is proved:
\begin{prop}\label{prpRnkSyz}
Let
\[ 0 \to \oplus_{i=1}^t S(-b_i) \to  \oplus_{i=1}^{t+1} S(-a_i) \to S \to S/I\to 0 \]
be a minimal free resolution of $I$. Then
\begin{enumerate}
\item $\sum b_i=\sum a_i$
\item $2\# \Sigma=\sum b_i^2-\sum a_i^2$.
\item $b_i\leq 5k$ for all $i$.
\item $\#\Sigma \leq 3k\min(a_i)$.
\end{enumerate}
In  particular,
\[ \rank \MW(\pi)=2\#\{i \mid b_i=5k\}.\]

Moreover, we can permute the $a_i$ and the $b_i$ such that
\begin{enumerate}
\item $b_1\geq b_2\geq\dots\geq b_t$.
\item $a_1\geq a_2\geq \dots\geq a_{t+1}$.
\item $a_i<b_i$ for all $i\in\{1,\dots,t\}$.
\end{enumerate}
\end{prop}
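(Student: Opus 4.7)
The plan is to apply Hilbert--Burch, extract the numerical constraints from the Hilbert series, handle the sharper degree bounds, and finally compute the Mordell--Weil rank via a cohomology argument on the twisted resolution. Since $\Sigma$ is a zero-dimensional (hence Cohen--Macaulay) subscheme of $\bp^2$, its ideal $I$ is a perfect ideal of height two in $S$; Hilbert--Burch then guarantees a minimal free resolution of length two with $\rank F_0=\rank F_1+1$, matching the shape stated.

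For $(1)$ and $(2)$ I would just read off the Hilbert series of $S/I$:
\[
\mathrm{HS}(S/I,T)=\frac{Q(T)}{(1-T)^3},\qquad Q(T):=1-\sum_{i=1}^{t+1} T^{a_i}+\sum_{i=1}^{t} T^{b_i}.
\]
Because $\Sigma$ is zero-dimensional of degree $\#\Sigma$, the Hilbert polynomial of $S/I$ is the constant $\#\Sigma$; hence $Q(T)$ has a double zero at $T=1$ with $Q(T)/(1-T)^2$ evaluating to $\#\Sigma$ at $T=1$. The vanishing $Q(1)=0$ is the rank count $t+1-t=1$, $Q'(1)=0$ yields $(1)$, and $Q''(1)=2\#\Sigma$ combined with $(1)$ yields $(2)$. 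For $(4)$, let $a=\min_i a_i$ and pick a minimal generator $g\in I_a$ not proportional to $f$, which exists because $I\supsetneq (f)$ (since $\Sigma$ is zero-dimensional while $V(f)$ is one-dimensional). Then $V(g)$ passes through every cusp of $C$, shares no component with $C$, and since $C$ has multiplicity two at each cusp the local intersection number at each cusp is at least two; B\'ezout then gives $2\#\Sigma\leq 6k\cdot a$, i.e.\ $(4)$.

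Part $(3)$ is the main obstacle. My plan is to exploit Gorenstein duality for perfect codimension-two ideals: $\Hom_S(-,S(-3))$ applied to the resolution presents the canonical module $\omega_{S/I}$ with generators in degrees $3-b_i$, so $b_i\leq 5k$ is equivalent to bounding the initial degree of $\omega_{S/I}$ below by $3-5k$. I would obtain this bound via the cuspidal adjunction for $C$: the local conductor of $C$ at each cusp has length one, and sections of $\omega_{\widetilde C}$ on the desingularisation pull back from degree-$(6k-3)$ adjoint curves, so accounting for the cuspidal contribution shows that $\omega_{S/I}$ is generated in degrees $\geq 3-5k$.

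Granted $(1)$--$(4)$, the rank formula is a direct cohomology computation. By Zariski's formula~\eqref{rankFormula} the rank equals $2\dim H^1(\mathcal{I}_\Sigma(5k-3))$. Twist the resolution of $\mathcal{I}_\Sigma$ by $\mathcal{O}(5k-3)$ and pass to cohomology: $H^2(\mathcal{O}(5k-3-a_i))$ vanishes because $(3)$ and $a_i<b_i$ force $a_i\leq 5k-1$, while $H^2(\mathcal{O}(5k-3-b_j))$ is one-dimensional exactly when $b_j=5k$ and zero otherwise, yielding $\dim H^1(\mathcal{I}_\Sigma(5k-3))=\#\{j\mid b_j=5k\}$. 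Finally, the permutation statement is a standard minimality argument: after sorting both $(a_i)$ and $(b_i)$ decreasingly, a violation $a_i\geq b_i$ would force a rectangular block of zero entries in the Hilbert--Burch matrix large enough to make a maximal minor vanish identically, contradicting that all $t+1$ generators of $I$ are nonzero.
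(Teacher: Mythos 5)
Your items (1), (2), (4), the permutation statement, and the final rank computation are all essentially sound: the Hilbert--series extraction of $\sum b_i=\sum a_i$ and $2\#\Sigma=\sum b_i^2-\sum a_i^2$, the B\'ezout argument for $\#\Sigma\leq 3k\min(a_i)$ (which does need $C$ irreducible so that a generator of degree $<6k$ is automatically coprime to $f$ --- you assert ``shares no component'' without this), the zero-block argument for $a_i<b_i$, and the identification of the cokernel in \eqref{rankFormula} with $\ker\bigl(H^2(\oplus\mathcal{O}(5k-3-b_i))\to H^2(\oplus\mathcal{O}(5k-3-a_i))\bigr)$ are standard and correctly executed. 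Note that the paper itself offers no proof of this proposition; it is quoted from \cite[Lemma 4.3]{KloSyz}, where the key point is proved by a quite different mechanism (finiteness of Mordell--Weil rank under base change, via Shioda--Tate/Libgober divisibility --- the same argument this paper invokes again in its deformation section).

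The genuine gap is item (3), and it is precisely the step you delegate to ``cuspidal adjunction.'' Your duality reduction is fine: $b_i\leq 5k$ for all $i$ is equivalent to $H^1(\mathcal{I}_\Sigma(n))=0$ for all $n\geq 5k-2$, i.e.\ the cusps impose independent conditions on curves of degree $5k-2$. But classical adjunction cannot deliver this. For an ordinary cusp the adjoint (conductor) ideal is exactly the maximal ideal of the point, so the adjoint scheme of $C$ \emph{is} $\Sigma$, and the classical independence-of-adjoint-conditions theorem gives $H^1(\mathcal{I}_\Sigma(n))=0$ only for $n\geq \deg C-3=6k-3$, i.e.\ $b_i\leq 6k-1$. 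The two bounds agree exactly when $k=1$ and differ by $k-1$ degrees in general --- and the paper needs $k=2$ (and, in Section 4, all $k=2d$), which is exactly the range where your argument proves nothing. The true source of the constant $5$ is not the conductor (whose local length being $1$ only encodes $\delta=1$) but the log canonical threshold $5/6$ of the cusp singularity $y^2=x^3$: one needs a Kawamata--Viehweg/Nadel or Esnault--Viehweg type vanishing for the multiplier ideal $\mathcal{J}(\tfrac56 C)=\mathcal{I}_\Sigma$, which yields $H^1(\mathcal{I}_\Sigma(n))=0$ for $n>5k-3$; alternatively one can run the asymptotic Mordell--Weil argument of \cite{KloSyz}. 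Since your proof of the rank formula $\rank\MW(\pi)=2\#\{i\mid b_i=5k\}$ uses (3) to kill the $H^2$ terms with $b_i>5k$ and $a_i\geq 5k$, this gap propagates to the final identity as well.
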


{} For later use, we will now apply  this proposition to restrict the possible resolutions of $I$ in the case of 
a sextic with $8$ cusps:
\begin{example}\label{ExaDeg6Alex4}
Suppose $k=1$ and $\#\Sigma=8$.
We want to determine all possibilities for $a_i$ and $b_i$. 

{From} $8=\#\Sigma \leq 3ka_{t+1}=3a_{t+1}$ it follows that $a_{t+1}\geq 3$. 

Let $r$ be the number of $b_i$'s that are equal to 5. For $j=3,4$ let $A_j$ be the number of $b_i$ there are equal 
to $j$ minus  the number of $a_i$ that are equal to $j$. One obtains the following three equalities:
\[ r+A_4+A_3+1=0,\;\; 5r+4A_4+3A_3=0,\;\; 25r+16A_4+9A_3=16.\]
{From} this it follows that $r=2$, $A_4=-1$ and $A_3=-2$.

{From} Proposition~\ref{prpRnkSyz} we have the following two possibilities: 
$b_1=b_2=5, a_1=4,a_2=a_3=3$ and $b_1=b_2=5,b_3=4,a_1=a_2=4,a_3=a_4=3$.

The latter possibility can be ruled out. Suppose we have a relation in degree 4 between the two generators of degree 3. 
For instance, if $f_1$ and $f_2$ are distinct generators of $I$ of degree 3, then there exist linear forms $g_1$ and $g_2$ 
such that $g_1f_1=g_2f_2$. This implies that $f_1$ and $f_2$ have a common factor of degree 2. So $f_1=g_2h$ 
and $f_2=g_1h$. Only one of the cusps can lie on the intersection of $g_1$ and $g_2$. Hence there are at least 
7 points that are both cusps of $C$ and points on the conic $h$. By Bezout's theorem this implies that the 
conic $V(h)$ is a component of the sextic $C$, i.e., we can write $C$ as a union of a conic and a quartic curve.
A conic cannot have a cusp as singularity, a quartic curve can have at most 3 cusps. Hence $C$ has at most 4 cusps, 
contradicting $\#\Sigma=8$. 

So only the case $b_1=b_2=5, a_1=4,a_2=a_3=3$ might occur.
\end{example}

We use the above results to calculate the degree of the Alexander polynomial of $C_{12,0}$, $C_{12,1}$ and $C_{12,2}$. 
We will show below that the degrees are $0,2,4$. Since all three curves have 32 cusps,  it follows that any two of 
them form a Zariski pair and the three of them form a Zariski triple.

\subsection{Alexander polynomial of $C_{12,0}$} 
\label{subsec-alex-120}
The calculation of the degree of the Alexander polynomial of the curve $C_{12,0}$ is more difficult than it is in 
the other two cases $C_{12,1}$ and $C_{12,2}$. The main problem is that $L'$, the minimal field of definition of $C_{12,0}$, 
is much bigger than that of either $C_{12,1}$ or $C_{12,2}$. A computer algebra package is used to calculate the value of 
the Hilbert function of the ideal of cusps of $C_{12,0}$ and $C_{12,1}$ at $5k-3$. It turns out that it is not feasible for 
$C_{12,0}$. However, by reducing the curve modulo a prime $\p$ of $L'$ lying over the prime $457$ the Hilbert function of 
the ideal of the cusps of~$C_{12,0}$ can be determined.

\begin{lem}\label{lemRedEqn} The curve $\overline{C_{12,0}}$ in $\bp^2_{\bff_{457}}$ defined by the vanishing of  
\[ \begin{array}{c}409u^8v^2w^2+32u^6v^2w^4+203u^6v^4w^2+263u^4v^2w^6+224u^4v^6w^2+\\290u^4v^4w^4+85u^{12}+ 
160w^{12}+317u^2w^6v^4+220u^2w^2v^8+436u^{10}v^2+\\276u^8v^4+399u^6v^6+82u^{10}w^2+352u^4w^8+
318u^8w^4+198u^6w^6+\\31u^2w^{10}+210u^2w^8v^2+451u^2w^4v^6+121w^4v^8+ 
306w^8v^4+291w^6v^6+\\31w^{10}v^2+208u^2v^{10}+103u^4v^8+148v^{12}+325v^{10}w^2\end{array}\]
has 32 cusps. 

Moreover, the lines $\ell_1$ and $\ell_2$ in the construction of $C_{12,0}$ can be chosen so that 
$\overline{C_{12,0}}$ is the reduction of $C_{12,0}$ modulo a prime ideal $\p$ in $\overline{\bq}$ lying over $457$ and 
each of the 32 cusps of $\overline{C_{12,0}}$ is the reduction modulo $\p$ of a cusp of~$C_{12,0}$.
\end{lem}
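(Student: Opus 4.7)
The plan is to treat the two assertions separately. For the first, the verification that the explicit polynomial $\bar f$ over $\bff_{457}$ defines a curve with exactly 32 ordinary cusps is a direct computation in a computer algebra system. I would compute the ideal $(\bar f,\partial_u \bar f,\partial_v \bar f,\partial_w \bar f)\subset \bff_{457}[u,v,w]$ in each of the three standard affine charts of $\bp^2$ and check that its vanishing locus consists of exactly 32 closed points. At each such point one then verifies that the singularity is an $A_2$-cusp, for instance by computing the Tjurina number and checking it equals 2, or equivalently by checking that the Hessian of $\bar f$ has rank 1 and that the cubic term on the kernel of the Hessian is nonzero, so that the local equation is analytically equivalent to $y^2=x^3$.

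For the second assertion I would follow the construction of $C_{12,0}$ in Section~\ref{subsec-661} symbolically. The sextic $\cC_{6,6}$, the bitangents $\ell_1,\ell_2$, and the 16 remaining inflexion points of $\cC_{6,6}$ together with their tangent lines are all defined over some number field $L\subset \overline{\bq}$. A choice of two tangent lines $t_1,t_2$ from this finite set determines the Kummer cover of order 2 ramified along $\ell_1,t_1,t_2$, and the pullback of $\cC_{6,6}$ produces a degree 12 polynomial whose coefficients lie in some finite extension $L'/L$. I would compute this pullback explicitly, then choose both $(t_1,t_2)$ and a prime $\mathfrak{p}$ of $L'$ above $457$ so that the reduction modulo $\mathfrak{p}$, after suitable rescaling, agrees with the polynomial displayed in the statement.

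To control the cusps under reduction, I would use that the 32 cusps of $C_{12,0}$ are exactly the preimages under $\kappa_2$ of the six cusps of $\cC_{6,6}$ together with the four inflexion points of $\cC_{6,6}$ lying on the three ramification lines, all defined over $L'$. Their reductions modulo $\mathfrak{p}$ give at most 32 points of $\bp^2_{\bff_{457}}$, each of which is a singularity of $\overline{C_{12,0}}$ of Tjurina number at least 2 by upper-semicontinuity of the Tjurina number. Since the first part produces exactly 32 such singularities, the reduction map on cusps must be bijective and every cusp of $C_{12,0}$ remains an ordinary cusp of $\overline{C_{12,0}}$. The main obstacle is the specialization step: one has to actually produce symbolic equations for $\ell_1,\ell_2,t_1,t_2$ that are explicit enough for a computer algebra system to reduce modulo a prime above $457$ and match the given polynomial. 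Since $L'$ is large, this matching is the practical difficulty, and explains why $457$ is singled out as the smallest prime for which the procedure works.
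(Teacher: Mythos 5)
Your strategy runs the construction ``forwards'': build $C_{12,0}$ symbolically over the large number field $L'$, then reduce modulo a prime above $457$ and match the result with the displayed polynomial. The paper's proof runs in exactly the opposite direction, and the difference is not cosmetic: the symbolic computation and prime factorization over $L'$ is precisely the step the authors state is infeasible, and the whole lemma is engineered to avoid it. The paper never writes down $t_1$, $t_2$, or $C_{12,0}$ in characteristic zero. Instead it (i) uses that $C_{6,6}$ is defined over $\bq(\xi)$ and $\ell_1$ over the small field $L_2$, in both of which $457$ splits completely (this is where $457\equiv 1 \bmod 12$ enters), so that $\overline{C_{6,6}}$ and $\overline{\ell_1}$ are defined over the prime field $\bff_{457}$; (ii) finds, by a finite-field computation, two $\bff_{457}$-rational points of the intersection of $\overline{C_{6,6}}$ with its Hessian $\overline{H}$ whose flex lines are not bitangents; (iii) argues that the scheme-theoretic intersection of $C_{6,6}$ with its Hessian commutes with reduction modulo $\p$, so that these two rational flexes \emph{lift} to characteristic-zero flexes, with flex lines $t_1,t_2$; and (iv) defines $C_{12,0}$ using those particular $t_1,t_2$ --- this is exactly the content of the clause ``can be chosen'' in the statement, and it makes the reduction claim essentially automatic. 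The lifting step (iii) is the key idea missing from your proposal: it replaces all work over $L'$ by a computation over $\bff_{457}$ plus an abstract compatibility argument. Correspondingly, your account of why $457$ appears is backwards: it is the smallest prime $p\equiv 1\bmod 12$ such that $\overline{C_{6,6}}\cap\overline{H}$ has an $\bff_p$-rational point not lying on a bitangent --- a condition tested entirely in characteristic $p$ --- not a prime for which some matching over $L'$ ``happens to work.''

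There is also a gap in your final counting argument. From ``each of the $32$ cusps of $C_{12,0}$ reduces to a singular point of $\overline{C_{12,0}}$'' and ``$\overline{C_{12,0}}$ has exactly $32$ singular points'' you cannot conclude bijectivity: you must rule out two characteristic-zero cusps colliding at a single point of the special fibre (in which case the reduction map would be neither injective nor surjective). The semicontinuity you invoke does this only in its local, additive form: the Tjurina number at a point of the special fibre bounds the \emph{sum} of the Tjurina numbers of the points of the generic fibre specializing to it (for instance because the Tjurina scheme of the union of the closures of two colliding cusps is finite and flat of degree $4$ over the valuation ring), so a collision would force a special point with Tjurina number at least $4$, contradicting that every singularity of $\overline{C_{12,0}}$ is an ordinary cusp. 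This can be repaired, but as stated the deduction ``hence bijective'' does not follow. The paper sidesteps the issue entirely: since the cusps of $\overline{C_{12,0}}$ admit the same geometric description as those of $C_{12,0}$ (preimages under the Kummer cover of the six cusps and of the flexes on the ramification lines), distinct cusps visibly reduce to distinct cusps.
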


\begin{proof}
Let $\p$ be a prime of $L=\bq(\xi)$ lying over $457$.
Since $457$ equals $1 \bmod 3$, the field $\bff_{457}$ contains a primitive third root of unity. In particular, 
$\p$ is split, i.e. $O_L/\p\cong \bff_{457}$. Since $C_{6,6}$ is defined over $L=\bq(\xi)$, it follows that the 
reduction $\overline{C_{6,6}}$ of $C_{6,6}$ modulo $\p$ is defined over $\bff_{457}$. A straightforward computation 
shows that the prime $457$ splits completely in the field $L_2$, the minimal field over which the line $\ell_1$ is 
defined. Let $\overline{\ell_1}$ be the reduction of $\ell_1$ modulo $457$.

Consider now the Hessian $\overline{H}$ of $\overline{C_{6,6}}$ and intersect  $\overline{H}$ with $\overline{C_{6,6}}$. 
This intersection has two $\bff_{457}$-rational points that correspond to flex lines, which are not bitangents. Call 
the corresponding flex lines $\overline{t_1},\overline{t_2}$. An easy computation shows that the reduction modulo 
$\p$ of the (scheme-theoretic) intersection of  the Hessian of $C_{6,6}$ with ${C_{6,6}}$ is precisely the 
(scheme-theoretic) intersection of $\overline{C_{6,6}}$ with $\overline{H}$.
In particular, the lines $\overline{t_1},\overline{t_2}$ are reductions modulo $\p$ of the flex lines $t_1$ and $t_2$ of 
$C_{6,6}$. Let $\overline{C_{12,0}}$ be the pullback under the Kummer map of order 2, ramified along $\overline{t_1}$, 
$\overline{t_2}$ and $\overline{\ell_1}$. Then $\overline{C_{12,0}}$ is the reduction of $C_{12,0}$ modulo a prime of 
$\bq$ lying over~$457$.

By making the above construction explicit one easily shows that the equation mentioned above is an equation for 
$\overline{C_{12,0}}$.

The  curve $\overline{C_{12,0}}$ has 32 cusps for the same reason $C_{12,0}$ does.
{From} the geometric description of the position of the cusps it follows immediately that all of the 32 cusps of $C_{12,0}$ 
reduce to different cusps of $\overline{C_{12,0}}$. (cf. Subsection~\ref{subsec-661}.)
\end{proof}

\begin{lem} Let $\tilde{I}\subset\bff_{457}[u,v,w]$ be the ideal of the cusps of {}$\overline{C_{12,0}}$. Then 
$\dim \tilde{I}_7=4$.
\end{lem}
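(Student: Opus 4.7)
The plan is to prove the lemma by a direct computation in $\mathbb{F}_{457}[u,v,w]$, made tractable precisely because Lemma~\ref{lemRedEqn} gives an explicit defining polynomial $\bar f \in \mathbb{F}_{457}[u,v,w]_{12}$ for $\overline{C_{12,0}}$ over the small finite field $\mathbb{F}_{457}$, rather than over the large number field $L'$ that would be required in characteristic zero.

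First, I would compute the Jacobian ideal $J = (\partial_u \bar f,\partial_v \bar f,\partial_w \bar f)$ and its radical $\tilde I = \sqrt{J}$ in $\mathbb{F}_{457}[u,v,w]$, using a Gr\"obner basis engine such as \emph{Singular} or \emph{Macaulay2}. By Lemma~\ref{lemRedEqn}, the singular locus of $\overline{C_{12,0}}$ consists of exactly $32$ ordinary cusps and no additional points arise from the reduction, so $\tilde I$ is the reduced ideal of those $32$ geometric points. Next I would read off $\dim_{\mathbb{F}_{457}} \tilde I_7$ from the Hilbert function of $\tilde I$ and check that it equals $4$. Since $\dim S_7 = \binom{9}{2} = 36$ and $\#\Sigma = 32$, the equality $\dim \tilde I_7 = 4$ is equivalent to $h_{\tilde I}(7) = 32 = \#\Sigma$, i.e.\ to surjectivity of the evaluation map $S_7 \to \bigoplus_{p\in\Sigma}\mathbb{F}_{457}$ appearing in \eqref{rankFormula}.

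The main obstacle is not conceptual but computational: the radical (or, equivalently, the saturation of $J$ by the irrelevant ideal) is the step most likely to strain the computer algebra system at this degree, and the whole point of reducing modulo $\mathfrak{p}$ is to keep coefficient arithmetic cheap. As a sanity check, one may also avoid the radical altogether by exploiting the construction of $\overline{C_{12,0}}$ as the Kummer pullback of $\overline{C_{6,6}}$ ramified along $\overline{\ell_1},\overline{t_1},\overline{t_2}$: the $32$ cusps are the preimages of the six cusps of $\overline{C_{6,6}}$ and of the flexes of $\overline{C_{6,6}}$ lying on the ramification lines, each of which can be located directly. Intersecting the kernels of the $32$ corresponding evaluation functionals on $S_7$ is then a pure linear-algebra problem of size $36 \times 32$ over $\mathbb{F}_{457}$, giving an independent verification that the rank is $32$ and hence $\dim \tilde I_7 = 4$.
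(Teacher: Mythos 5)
Your main line is essentially the paper's own proof: the paper's entire argument is a \emph{Singular} computation over $\bff_{457}$, namely a graded free resolution
\[ 0 \to \tilde{S}(-9)^4\to \tilde{S}(-8)\oplus \tilde{S}(-7)^4 \to\tilde{S}\to \tilde{S}/\tilde{I} \to 0,\qquad \tilde{S}=\bff_{457}[u,v,w],\]
from which $\dim \tilde{I}_7=4$ is read off (the minimal generators have degrees $8,7,7,7,7$, so $\tilde{I}_7$ is spanned by the four independent degree-$7$ generators). Reading the same number off the Hilbert function of $\tilde{I}$, or off the rank of the $36\times 32$ evaluation matrix at the cusps, is the same computation packaged differently, and your remark that $\dim \tilde{I}_7=4$ is equivalent to surjectivity of $S_7\to \oplus_{p\in\Sigma}\bff_{457}$ is exactly how the lemma gets used in Proposition~\ref{propDeg0}.

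One statement in your write-up is wrong and should be corrected, because following it literally would break the computation: the radical of the Jacobian ideal is \emph{not} equivalent to its saturation by the irrelevant ideal. For a cuspidal curve these differ; as the paper itself records at the start of Section 4, if $J$ is the saturated Jacobian ideal and $I$ the (radical) ideal of the cusps, then $\sqrt{J}=I$ but $\length \Proj S/J=2\length \Proj S/I$, i.e.\ the saturated Jacobian ideal is non-reduced, of length $2$ at each cusp. Concretely, membership in the degree-$7$ piece of the saturation imposes $64$ local conditions on the $36$-dimensional space $S_7$, so that piece is much smaller than $\tilde{I}_7$ (very likely zero), and its Hilbert function would not give the lemma. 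So you must genuinely take the radical (or equivalently the ideal of the reduced point set), not the saturation. Your fallback verification via the Kummer construction --- locating the $32$ cusps and intersecting the kernels of the evaluation functionals on $S_7$ --- is unaffected by this and is sound; just note that cusps forming Galois orbits over $\bff_{457}$ are handled by doing the linear algebra over a finite extension, which changes neither the rank of the evaluation map nor $\dim \tilde{I}_7$.
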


\begin{proof} We use Singular \cite{DGPS} to compute a resolution of $\tilde{I}$. Let $\tilde{S}$ be the ring
$\bff_{457}[u,v,w]$. We obtain the following resolution of $\tilde{I}$:
\[ 0 \to \tilde{S}(-9)^4\to \tilde{S}(-8)\oplus \tilde{S}(-7)^4 \to\tilde{S}\to \tilde{S}/\tilde{I} \to 0.\]
Hence $\dim \tilde{I}_7=4$.
\end{proof}

\begin{prop} \label{propDeg0} 
In the construction of $C_{12,0}$ the two flex lines $t_1$ and $t_2$ can be chosen so that the degree of the 
Alexander polynomial of $C_{12,0}$ equals~0.
\end{prop}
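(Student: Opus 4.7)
The plan is to use the Zariski formula (\ref{rankFormula}) with $k=2$ and reduce the problem to showing that $\dim I_7 = 4$, where $I\subset S=\bc[u,v,w]$ is the ideal of the $32$ cusps. Since $\dim S_7=36$, the evaluation map $\phi\colon S_7\to \bc^{32}$ has kernel $I_7$ of dimension at least $4$ automatically, so the entire content of the proposition is the matching upper bound $\dim I_7\leq 4$. Equivalently, $\phi$ must have full rank $32$, in which case $\coker(\phi)=0$ and $\deg\Delta(t)=0$.

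First I would deduce this upper bound in characteristic zero from the characteristic-$457$ computation carried out in the two preceding lemmas. I would choose $\ell_1$, $t_1$, $t_2$ as in Lemma~\ref{lemRedEqn}, so that $C_{12,0}$ together with each of its $32$ cusps is defined over the local ring $R = (\mathcal{O}_{L'})_\p$ with residue field $\bff_{457}$, and then form the $R$-linear evaluation map $\phi_R\colon R[u,v,w]_7\to R^{32}$. The key observation is semicontinuity of matrix rank: $\rank(\phi_R\otimes\bc)=\rank(\phi_R\otimes L')\geq \rank(\phi_R\otimes\bff_{457})$, because any $32\times 32$ minor of $\phi_R$ that is nonzero in $\bff_{457}$ must have been nonzero already in $R$, hence in $L'$, hence in $\bc$. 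The previous lemma gives $\dim\tilde I_7=4$, i.e., $\phi_R\otimes\bff_{457}$ has full rank $32$, which then forces $\phi_R\otimes\bc$ to have rank $32$ as well.

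The hard part is not this semicontinuity step, which is essentially formal once the setup is in place, but rather verifying the geometric input encoded in Lemma~\ref{lemRedEqn}: that $457$ splits suitably in $L'$, that the Kummer cover of Section~\ref{subsec-661} descends to $R$ with the same combinatorial picture, and — most delicately — that reduction modulo $\p$ preserves the distinctness and cuspidal nature of all $32$ singular points, so that the evaluation targets $\bc^{32}$ and $\bff_{457}^{32}$ are genuinely parallel. The Singular resolution over $\bff_{457}$ supplies $\dim\tilde I_7 = 4$, after which the argument above transports this equality to characteristic zero and concludes the proof.
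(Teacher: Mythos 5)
Your proposal is correct and is essentially the paper's own proof: both arguments reduce the statement via (\ref{rankFormula}) to the bound $\dim I_7\leq 4$, choose the lines as in Lemma~\ref{lemRedEqn}, and transfer the Singular computation $\dim \tilde{I}_7=4$ from $\bff_{457}$ back to characteristic zero by semicontinuity under reduction modulo $\p$. The only difference is packaging — the paper phrases the semicontinuity as ``$I_7$ has a $\ZZ_\p$-integral basis whose reduction stays independent, and $\overline{I_7}\subseteq\tilde{I}_7$,'' while you phrase it as rank semicontinuity of the evaluation matrix via minors (noting only that, since individual cusps need not be $L'$-rational, one should allow a finite extension of $L'$ and of the residue field, which changes nothing since dimensions are stable under field extension).
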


\begin{proof}
Since $\#\Sigma=32$ and $\dim S_7=36$ it follows from (\ref{rankFormula})  that the degree of the Alexander polynomial 
equals $2(\dim_\bc I_7-4)$. Hence in order to show that the rank is zero it suffices to prove that $\dim I_7\leq 4$.

Choose the lines $t_1$ and $t_2$ as described in the proof of Lemma~\ref{lemRedEqn}. In particular,  there exists a 
prime $\p$ of $\overline{\bq}$ over $457$  such that reduction modulo $\p$ of $C_{12,0}$ is the curve 
{}$\overline{C_{12,0}}$ of Lemma~\ref{lemRedEqn}.

Let $\overline{I_7}$ be the reduction of $I_7$ modulo $\p$. (The reduction of $I_7$ as a vector space.)
One can easily show that $I_7$ has a $\ZZ_{\p}$-integral basis such that modulo $\p$ the elements of this basis remain 
linearly independent. Hence the $\bff_{457}$-dimension of $\overline{I_7}$ equals the $\bc$-dimension of $I_7$. 

Now $I_7$ consists of polynomials that vanish at all of the cusps of $C_{12,0}$. This implies that the elements of 
$\overline{I_7}$ vanish at the reduction modulo $\p$ of all of the cusps of $C_{12,0}$, which implies
\[ \overline{I_7} \subset \tilde{I}_7.\]
In particular, we have
\[\dim I_7=\dim \overline{I_7} \leq \dim \tilde{I}_7=4.\]
\end{proof}

\begin{rem} 
We would like  to identify the smallest prime number $p$ such that the reduction of $C_{12,0}$ modulo a prime 
$\p$ over $p$ can be defined over the prime field $\bff_p$. 
For this we need the reduction of both $C_{6,6}$ and the union of the three lines $\ell_1$, $t_1$ and $t_2$ 
to be defined over $\bff_p$. We determine the smallest prime $p$ where a slightly  stronger condition holds, 
namely each of the lines $\ell_1, t_1$ and $t_2$ can be defined over the prime field.

Now $C_{6,6}$ is defined over $\bff_p$ if and only if $p\equiv 1 \bmod 3$ and the line $\ell_1$ is defined over 
$\bff_p$ if and only if $p\equiv 1 \bmod 12$. Then we look for the smallest prime $p\equiv 1 \bmod 12$ 
such that the Hessian of $C_{6,6}$ and $C_{6,6}$ intersect in a $\bff_p$-rational intersection point, which is 
not a bitangent. It turns out that this prime is $p=457$.
\end{rem}

\subsection{Alexander polynomial of $C_{12,1}$}

\begin{prop} \label{propDeg2} 
The degree of the Alexander polynomial of $C_{12,1}$ is two.
\end{prop}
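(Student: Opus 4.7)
The strategy is to apply Zariski's formula~(\ref{rankFormula}) with $k=2$ and $\#\Sigma = 32$: since $\dim S_7 = 36$, if $I$ denotes the homogeneous ideal of the $32$ cusps of $C_{12,1}$ then
\[
 \deg \Delta(C_{12,1}) \;=\; 2(\dim_\bc I_7 - 4),
\]
and the proposition reduces to the single numerical claim $\dim_\bc I_7 = 5$.

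Unlike the situation for $C_{12,0}$, here the direct characteristic-zero computation is feasible. Indeed, $\cC_{6,6}$ is already defined over $\bq(\xi)$, the two bitangents $\ell_1$ and $\ell_2$ live over a quadratic extension of this field (the extension needed to split the preimage of $t_{1,3}$ under the order-$2$ Kummer cover of Table~\ref{tab:x3y3z3}), and the third, generic, ramification line may be taken $\bq$-rational. I would first write down an explicit model for $C_{12,1}$ together with the explicit list of its $32$ cusps---$24$ lying over the six cusps of $\cC_{6,6}$ and $8$ coming from the tangencies of $\ell_1$ and $\ell_2$ with $\cC_{6,6}$ (cf.~Subsection~\ref{subsec-662})---and then feed $I$ to Singular to obtain a minimal free resolution. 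By Proposition~\ref{prpRnkSyz}, $\rk\MW$ equals twice the number of $b_i$'s equal to $5k=10$, and combining this count with the identities $\sum b_i = \sum a_i$ and $\sum b_i^2 - \sum a_i^2 = 2\cdot 32 = 64$ pins down the entire Hilbert function, in particular $\dim I_7$.

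The main obstacle I anticipate is practical rather than conceptual: after two successive Kummer pullbacks the defining equation and the cusp ideal are bulky, so the Singular session has to be set up with care, and the ``generic'' third ramification line must be chosen so that no two of the $32$ cusps coincide. If the characteristic-zero computation turns out to be too heavy, the fallback mirrors Lemma~\ref{lemRedEqn}: reduce modulo a prime $\p$ over a rational prime $p$ that splits completely in the field of definition, compute $\dim \tilde I_7$ in Singular, and combine the resulting inequality $\dim I_7 \le \dim\tilde I_7$ with an independent lower bound $\dim I_7 \ge 5$ obtained by exhibiting a single non-torsion quasi-toric relation $h_1^2 + h_2^3 + f_{12,1}\, h_3^6 = 0$, which by Theorem~\ref{thm-mw-qt} forces $\rk \MW \ge 2$ (the rank being even, since $\Delta=(t^2-t+1)^m$). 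Constructing such an explicit relation, guided by the special geometry of $\ell_1$ and $\ell_2$ as bitangents through inflection points of $\cC_{6,6}$, would be the most delicate step in this backup route.
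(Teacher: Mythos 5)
Your proposal follows essentially the same route as the paper: construct an explicit model of $C_{12,1}$ (the paper fixes coordinates so that $\ell_1,\ell_2$ are $u\pm i(\zeta_3+1)v=0$, takes $\ell_3=\{5u+w=0\}$, and notes the Alexander polynomial is independent of the generic line by equisingular-deformation invariance), verify the count of $32$ cusps, compute a minimal free resolution of the cusp ideal in Singular, and apply Proposition~\ref{prpRnkSyz}; the paper's resolution is $0 \to S(-10)\oplus S(-9)^2 \to S(-8)\oplus S(-7)^2\oplus S(-6)\to S \to S/I \to 0$, so exactly one $b_i$ equals $10$ and the degree is $2$ (equivalently $\dim I_7=5$, as you predicted). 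Your fallback via reduction modulo a split prime plus an explicit quasi-toric relation is sound but unnecessary here---the characteristic-zero computation is feasible, which is exactly why the paper reserves the reduction argument for $C_{12,0}$.
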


\begin{proof}
The first step in the construction is to take a Kummer cover $\kappa_2$ of order 2 ramified along 
$t_{1,1},t_{1,2}$ and $t_{2,1}$. We choose the coordinates  on the domain of $\kappa_2$ such that the 
preimage of $t_{1,1}$ is $u=0$, the preimage of $t_{1,2}$ is $v=0$, and the preimage of $t_{2,1}$ is $w=0$.
We obtain then the following equation for $C_{6,6}$ {}(where $\xi$ is a primitive sixth root of unity):
\begin{eqnarray*} 
u^6+3u^4v^2-3u^2v^4+v^6-(3{\xi}^2-3)u^4w^2-(12{\xi}^2+6)u^2v^2w^2&&\\ 
-(3{\xi}^2+6)v^4w^2+9{\xi}^2 u^2w^4+(9{\xi}^2+9)v^2w^4-(6{\xi}^2-3)w^6.
\end{eqnarray*}
This curve has precisely 6 cusps.

The preimage of $t_{1,3}$ under this map  consists of the two lines $\ell_1,\ell_2$ given by 
$u\pm i(\zeta_3+1)v=0$. In the construction of $C_{12,1}$ one needs to choose a general line $\ell_3$, i.e. 
$\ell_3$ intersects $C_{6,6}$ in six distinct points, and none of these six points is on $\ell_1\cup \ell_2$. 
The curve $C_{12,2}$ depends on the choice of the general line, but the Alexander polynomial does not depend on this choice.
This follows from the fact that by taking a different line we obtain a curve which 
is an equisingular deformation of the original curve. Since the Alexander polynomial is invariant under equisingular 
deformations the choice of this line does not influence the degree of the Alexander polynomial.
We pick the line $5u+w=0$ for $\ell_3$.

Consider the Kummer cover of {}order 2 of $\bp^2$ ramified along the lines $\ell_1,\ell_2,\ell_3$. Call this 
curve $C_{12,1}$. The singular points of $C_{12,1}$ are either in the inverse image of the singular points 
of $C_{6,6}$ or lie on the ramification divisor of the cover. If $P$ is in the preimage of a singular point, 
then the Kummer map is unramified at $P$. Hence the Kummer map is locally an isomorphism and therefore the 
type of singularity does not change. This yields 24 cusps. 

Now suppose that $P$ lies on the ramification divisor. If $P$ is in the preimage of  $\ell_1$ or $\ell_2$, then 
the image of $P$ is  an inflexion point of $C_{6,6}$ and $\ell_i$ is a flex line. An easy computation in 
local coordinates shows that $P$ must be a cusp of $C_{12,0}$. This yields 8 cusps. 

If $P$ is  in the preimage of $\ell_3$, then at the image of $P$ the curve $C_{6,6}$ intersects $\ell_3$ 
transversely. An easy computation shows that $P$ is a smooth point of $C_{12,0}$. 
 
In total we find 32 cusps $P_1,\dots,P_{32}$. The computer algebra package Singular~\cite{DGPS} was used to 
compute the following resolution of the ideal $I$ of $\{P_1,\dots,P_{32}\}$:
\[ 0 \to S(-10)\oplus S(-9)^2 \to S(-8)\oplus {}S(-7)^2\oplus S(-6)\to S \to S/I \to 0.\]
Since exactly one of the $b_i$'s equals 10, we obtain from Proposition~\ref{prpRnkSyz} that the degree of the 
Alexander polynomial is~2.
\end{proof}

\subsection{Alexander polynomial of $C_{12,2}$}
An example where the Mordell-Weil rank equals 4 is easy to construct and does not require a computer algebra package.

\begin{prop}\label{propDeg4} 
The degree of the Alexander polynomial of $C_{12,2}$ equals~4.
\end{prop}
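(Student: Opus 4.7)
The plan is to compute $\dim I_7$ directly, where $I\subset S$ is the ideal of the $32$ cusps of $C_{12,2}$. By Zariski's formula~(\ref{rankFormula}), $\deg\Delta(C_{12,2}) = 2(\dim I_7 - 4)$, so it suffices to prove $\dim I_7 = 6$. The idea is to exploit the Galois structure of the Kummer cover, which bypasses the need for a computer algebra package.

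After a linear change of coordinates, the three generic lines of the construction become the coordinate axes, so $\kappa_2$ takes the form $[u:v:w]\mapsto [u^2:v^2:w^2]$. This map is Galois with group $G\cong(\ZZ/2)^3$ acting on $\bc[u,v,w]$ by sign changes of the variables (the diagonal $(-1,-1,-1)$ acts trivially on $\bp^2$), yielding the isotypic decomposition
\[
\bc[u,v,w] = \bigoplus_{(a,b,c)\in\{0,1\}^3} u^a v^b w^c\cdot\kappa_2^*(\bc[X,Y,Z]).
\]
Since $\Sigma=\kappa_2^{-1}(\tilde\Sigma)$ (with $\tilde\Sigma$ the eight cusps of $C_{6,8}$) is $G$-invariant, so is $I$, and $I$ decomposes compatibly. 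Genericity of the three lines places $\tilde\Sigma$ in the open torus $XYZ\neq 0$; evaluating an isotypic piece $u^av^bw^c\cdot\kappa_2^*(g)$ at the four preimages of any $q\in\tilde\Sigma$ gives, up to a nonzero common factor, $g(q)$ times the $G$-character values at those preimages. Hence such a piece lies in $I$ if and only if $g\in J$, the ideal of $\tilde\Sigma$.

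In degree $7$, the parity condition $d-(a+b+c)=2\deg g$ forces $a+b+c$ to be odd, so only four components contribute. Three of them, indexed by $(1,0,0),(0,1,0),(0,0,1)$, contribute $\kappa_2^*(J_3)$ each; the remaining one, indexed by $(1,1,1)$, contributes $\kappa_2^*(J_2)$. By Example~\ref{ExaDeg6Alex4}, $J$ has the minimal free resolution
\[
0\to S(-5)^2\to S(-4)\oplus S(-3)^2\to S\to S/J\to 0,
\]
so $\dim J_3=2$ and $J_2=0$. Since $\kappa_2^*$ is injective on each graded piece, $\dim I_7 = 3\cdot 2 + 0 = 6$, and Zariski's formula gives $\deg\Delta(C_{12,2})=4$.

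I expect the main obstacle to be justifying the isotypic decomposition $I=\bigoplus_\chi I^\chi=\bigoplus_\chi u^a v^b w^c\cdot\kappa_2^*(J)$. It relies on character orthogonality for $G$ together with the fact that $\tilde\Sigma$ avoids all three coordinate axes: these two ingredients ensure that the vanishing of a polynomial at the four preimages of $q\in\tilde\Sigma$ decouples into four independent scalar conditions on the isotypic components, each reducing to $g(q)=0$.
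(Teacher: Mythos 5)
Your proof is correct, and it shares its backbone with the paper's: both reduce everything to the cusp ideal $J$ of the sextic $C_{6,8}$, whose shape is pinned down by Example~\ref{ExaDeg6Alex4}, and both rest on the identification of the cusp ideal of $C_{12,2}$ with the Kummer pullback of $J$. The difference is in how each side exploits that identification. The paper pulls back the entire minimal free resolution of $J$ (exactness and minimality are preserved because $\bc[u,v,w]$ is free over $\bc[u^2,v^2,w^2]$), obtaining
\[ 0 \to S(-10)^2\to S(-8)\oplus S(-6)^2\to S \to S/I \to 0, \]
and then invokes Proposition~\ref{prpRnkSyz}: two syzygies of degree $5k=10$ give Mordell--Weil rank $4$. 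You instead never touch syzygies of $I$: you prove the identification $I=\kappa_2^*(J)\cdot S$ concretely, via the isotypic decomposition of $\bc[u,v,w]$ under the sign action of $(\ZZ/2)^3$ and evaluation at the four preimages of each cusp (using that the generic branch lines keep $\tilde\Sigma$ off the coordinate triangle), and then read off $\dim I_7 = 3\dim J_3 + \dim J_2 = 6$, finishing with formula~(\ref{rankFormula}). Your route buys a self-contained, purely linear-algebraic justification of the step the paper asserts from transversality alone (that the ideal of the $32$ preimage points is exactly the extended ideal, and in particular is saturated); it also needs less input from Example~\ref{ExaDeg6Alex4} (only $\dim J_2=0$ and $\dim J_3=2$, not the full resolution). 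The paper's route buys more output: the complete minimal resolution of $I$, which is what Proposition~\ref{prpRnkSyz} consumes, generalizes verbatim to degree-$d$ base changes, and is reused in the deformation arguments of the final section. Both arguments, yours and the paper's, equally rely on the construction's claims that $C_{12,2}$ is irreducible with exactly $32$ cusps, as required for Zariski's formula.
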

\begin{proof}
We start by considering the locus of the cusps of $C_{6,8}$. From Example~\ref{ExaDeg6Alex4} it 
follows that a minimal resolution of the ideal of the cusps of $C_{6,8}$ has the following form
\[ 0 \to S(-5)^2\to S(-4)\oplus S(-3)^2\to S \to S/I(\Sigma) \to 0.\]

Take a general  Kummer cover $\kappa_2$ of {}order 2, i.e. assume that the branch locus of $\kappa_2$ 
intersects the curve $C_{6,8}$ transversely. Let $C_{12,2}$ be the inverse image of $C_{6,8}$. 
Let $\tilde{\Sigma}$ be the set of points where $C_{12,2}$ has cusps. Since none of the cusps of $C_{6,6}$ 
lie on the critical locus and $C_{6,6}$ intersects the critical locus transversely, it follows that 
$\tilde{\Sigma}=\varphi^{-1}(\Sigma)$ and that $I(\tilde{\Sigma})=\varphi^*(I(\Sigma))$. In particular, 
the minimal resolution of $I(\Sigma)$ can be pulled back to a minimal resolution of $I(\tilde{\Sigma})$:
\[ 0 \to S(-10)^2\to S(-8)\oplus S(-6)^2\to S \to S/I(\tilde{\Sigma}) \to 0.\]
By Proposition~\ref{prpRnkSyz} the degree of the Alexander polynomial of $C_{12,2}$ equals twice the number 
of independent syzygies of degree 10, hence the Alexander polynomial of  $C_{12,2}$ has degree~4.
\end{proof}

\section{Description of Mordell-Weil groups}
Let $ f\in S_{6k}$. The Mordell-Weil group of the elliptic threefold $y^2=x^3+ f$ over $\bp^2$ is the group 
of $\CC(u,v)$-valued points of the elliptic curve $y^2=x^3+f(u,v,1)$ over $\CC(u,v)$. Such a point is of the 
form $(\frac{h_2}{h_3^2},\frac{h_1}{h_3^2})$ for  some $h_i\in \CC[u,v]$. Hence elements of the Mordell-Weil 
group correspond with elements of 
$$
\cQ_{(2,3,6)}(f):=\{(h_1,h_2,h_3)\in \CC[u,v,w] \mid h_1^2+h_2^3+h_3^6f=0\},
$$
i.e. they correspond with quasi-toric relations of $f$ of elliptic type $(2,3,6)$. The Mordell-Weil group is 
finitely generated if and only if $f$ is not a sixth power in $\CC(u,v)$.

According to~\cite{CogLib}, if $f$ is an irreducible polynomial such that $C=V(f)$ has only cusps
and nodes as singularities, then the rank of the group of quasi-toric relations of $f$ equals the degree of 
the Alexander polynomial $\Delta(t)$ of the curve $C=\{f=0\}$.

According to subsection~\ref{subsec-alex-120} the equation of $C_{12,0}$ given in Lemma~\ref{lemRedEqn} does not 
decompose into any (non-trivial) quasi-toric relation of elliptic type $(2,3,6)$. Hence the Mordell-Weil 
group is trivial in this case.

The elliptic curve $y^2=x^3+1$ has complex multiplication by a sixth root of unity. Therefore the group of 
rational sections is a $\ZZ[\xi]$-module. We will now give a generator of the free part of the Mordell-Weil 
group of $C_{12,1}$ (as a $\ZZ[\xi]$-module).
{From} the construction given in section~\ref{subsec-662}, note that the Fermat curve satisfies
\begin{equation}
\label{eq-121-1}
t_{1,1}t_{1,2}t_{1,3}+v^3=u^3+v^3+w^3
\end{equation}
After the Kummer cover $\kappa_2$ of order two ramified along $t_{1,1}$, $t_{1,2}$, and $t_{2,1}$, 
identity~\eqref{eq-121-1} becomes

\begin{equation}
\label{eq-121-2}
(t_{1,1}t_{1,2})^2\ell_1\ell_2+q^3=f_{6,6}
\end{equation}
where by abuse of notation $t_{1,1}$ (resp. $t_{1,2}$) denotes the set-theoretically preimage of $t_{1,1}$
(resp. $t_{1,2}$) by $\kappa_2$, $f_{6,6}$ denotes $\kappa_2^* (u^3+v^3+w^3)$ which is the equation
of $ C_{6,6}$, and $q$ denotes the conic which is preimage of $v=0$ by $\kappa_2$.
Finally, after the second Kummer cover of order 2, ramified along $\ell_1$, $\ell_2$, and a generic line,
say $\ell$, note that identity~\eqref{eq-121-2} becomes

\begin{equation}
\label{eq-121-3}
(\tilde t_{1,1}\tilde t_{1,2}\ell_1\ell_2)^2+\tilde q^3=\tilde f_{6,6}
\end{equation}
where by abuse of notation $\ell_1$ (resp. $\ell_2$) denotes the set-theoretically preimage of $\ell_1$
(resp. $\ell_2$) by the cover and the tildes denote the transformation of a given equation by the cover.
Note that identity~\eqref{eq-121-3} generates $\cQ_{(2,3,6)}(C_{12,1})$. This shows the following.

\begin{prop}
\label{prop-q121}
The free part of the Mordell-Weil group $\cQ_{(2,3,6)}(C_{12,1})$ is generated as a $\ZZ[\xi]$-module by
$(\tilde t_{1,1}\tilde t_{1,2}\ell_1\ell_2,\tilde q,-1)$.
\end{prop}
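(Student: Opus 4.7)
The plan is to verify identity~\eqref{eq-121-3} at the polynomial level by pulling back a factorization on the Fermat cubic through the two Kummer covers used in the construction of $C_{12,1}$, and then to combine this with Proposition~\ref{propDeg2} together with the complex multiplication arising from $j=0$ to conclude the generation statement.

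First I would establish the identity. A direct computation gives $u^3+w^3 = t_{1,1}t_{1,2}t_{1,3}$, hence $t_{1,1}t_{1,2}t_{1,3} + v^3 = u^3+v^3+w^3$. Pulling back by $\kappa_2$ (ramified along $t_{1,1}, t_{1,2}, t_{2,1}$): the branch lines $t_{1,1}, t_{1,2}$ pull back to squares of lines; the line $t_{1,3}$, being the third element of a pencil through the totally ramified point $[0\!:\!1\!:\!0]$, splits as $\ell_1 \ell_2$; and $v=0$ becomes an irreducible conic $q$. This yields \eqref{eq-121-2}. Pulling back again through the second cover (ramified along $\ell_1, \ell_2, \ell_3$) now treats $\ell_1, \ell_2$ as branch lines so their pullbacks acquire multiplicity two, while $t_{1,1},t_{1,2},q$ transform generically to $\tilde t_{1,1},\tilde t_{1,2},\tilde q$. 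Combining produces~\eqref{eq-121-3}, so $(\tilde t_{1,1}\tilde t_{1,2}\ell_1\ell_2,\tilde q,-1)$ is indeed an element of $\cQ_{(2,3,6)}(C_{12,1})$.

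Next I would identify the module structure of the free part. Since the elliptic curve $y^2=x^3+f$ has $j$-invariant $0$, the automorphism $(x,y)\mapsto (\xi^2 x,-y)$ equips the Mordell-Weil group with a natural action of $\ZZ[\xi]$. By Proposition~\ref{propDeg2} and Theorem~\ref{thm-mw-qt}, the $\ZZ$-rank of $\cQ_{(2,3,6)}(C_{12,1})$ equals $2$, so the free part has $\ZZ[\xi]$-rank $1$; since $\ZZ[\xi]$ is a PID, this free part is free of rank one over $\ZZ[\xi]$.

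The main obstacle is verifying primitivity, i.e.\ that our explicit element is a $\ZZ[\xi]$-generator and not merely a non-trivial element of a proper submodule of finite index. I would argue this via the correspondence between quasi-toric relations and syzygies of the ideal of cusps from~\cite{KloSyz}. Our element has $h_3=-1$ of the minimum possible degree~$0$, and therefore corresponds to a syzygy of degree $5k=10$ in the minimal free resolution of the ideal of cusps computed in the proof of Proposition~\ref{propDeg2}. Since that resolution has a single summand $S(-10)$ at the top of its syzygy module, the corresponding syzygy is unique up to a scalar in $\bc^\times$. Hence the element is primitive in $\cQ_{(2,3,6)}(C_{12,1})$ and generates the free part as a $\ZZ[\xi]$-module.
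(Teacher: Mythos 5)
Your first two paragraphs are correct and coincide with the paper's own argument: identity~\eqref{eq-121-3} shows that $h:=(\tilde t_{1,1}\tilde t_{1,2}\ell_1\ell_2,\tilde q,-1)$ lies in $\cQ_{(2,3,6)}(C_{12,1})$, and Proposition~\ref{propDeg2} together with the $\ZZ[\xi]$-structure coming from complex multiplication reduces the whole statement to showing that $h$ is \emph{primitive}, i.e.\ that $h=\mu\tilde h$ forces $\mu$ to be a unit of $\ZZ[\xi]$.

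Your argument for that last step has a genuine gap. The dimension of the space of degree-$10$ syzygies of the ideal of cusps equals $\frac{1}{2}\rank\MW(\pi)$; it is an invariant of the curve alone and is completely insensitive to \emph{which} element of the Mordell--Weil group one has written down. Concretely, suppose $h=(1+\xi)\tilde h$ for some section $\tilde h$ (note that $1+\xi$ has norm $3$ in $\ZZ[\xi]$, so it is not a unit): the resolution computed in the proof of Proposition~\ref{propDeg2} would be exactly the same, the degree-$10$ syzygy would still be unique up to a scalar in $\bc^{\times}$, and yet $h$ would fail to generate. So ``the associated syzygy spans a one-dimensional space'' is merely a restatement of $\rank\MW(\pi)=2$, which you already have; no $\bc$-linear correspondence between sections and syzygies can distinguish a generator of a rank-one free $\ZZ[\xi]$-module from a proper multiple of one, since $h$ and $\mu h$ map to proportional vectors under any such correspondence. (A secondary issue: the relation in \cite{KloSyz} between quasi-toric relations and top-degree syzygies is a statement about ranks and dimensions, not a map assigning a specific syzygy to each section with constant $h_3$; that assignment would itself need justification.) What is missing is an arithmetic input about the particular element $h$. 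The paper supplies it by invoking \cite[Proposition~4.4]{CogLib}: the pencil generated by $(\tilde t_{1,1}\tilde t_{1,2}\ell_1\ell_2)^2$ and $\tilde q^3$ is primitive, and this implies that $h$ cannot be a non-unit multiple of another section. An alternative repair would be a height computation (if $h=\mu\tilde h$ then $\tilde h$ has canonical height smaller by the factor $|\mu|^2\geq 3$, and one must rule out non-torsion sections of such small height), but that is substantial work which your syzygy count does not replace.
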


\begin{proof}
On the one hand $h:=(\tilde t_{1,1}\tilde t_{1,2}\ell_1\ell_2,\tilde q,-1)$ satisfies an equation of elliptic
type $(2,3,6)$ for $C_{12,1}$ as shown in~\eqref{eq-121-3}, therefore $h\in \cQ_{(2,3,6)}(C_{12,1})$. 
Consider $\omega_6$ the order 6 action $(x,y)\mapsto (\xi^2 x,-y)$ associated with the elliptic curve $y^2+x^3+1=0$ 
over $\CC[u,v]$ and define the map $\varphi:\bz[\xi] \to \cQ_{(2,3,6)}(C_{12,1})$ given by $a+b \xi \mapsto (a+ b \omega) h$.
Note that $\varphi$ is clearly injective. Moreover, $\rank \bz[\xi]=\rank \cQ_{(2,3,6)}(C_{12,1})=2$ by 
Proposition~\ref{propDeg2}. Hence, $\varphi$ is an isomorphism if and only if $h=\mu \tilde h$ implies $\mu$ is
a unit in $\bz[\xi]$ (that is, its module as a complex number is one). This is a consequence of the fact that the pencil 
generated by $(\tilde t_{1,1}\tilde t_{1,2}\ell_1\ell_2)^2$ and $\tilde q^3$ is primitive (see~\cite[Proposition~4.4]{CogLib}).
\end{proof}

\begin{rem}
Note that $\cQ_{(2,3,6)}(C_{12,1})$ is strictly bigger than $\cQ_{(2,3,6)}(C_{6,6})=\{0\}$, therefore
the rank of  the Mordell-Weil group after a base change increases.
\end{rem}

The curve $C_{12,2}$ requires special attention: 

Using the techniques in~\cite{swbmn} one can show that the cusps of $C_{6,8}$ can be grouped in four pairs such that
there exist four conics $q_1,...,q_4$ satisfying the following conditions:
\begin{enumerate}
\item each pair of cusps belongs to three of the four conics,
\item any two of these pairs of cusps are the intersection points of exactly two conics, and
\item the remaining two pairs of cusps belong to the remaining two conics.
\end{enumerate}
In the pencil generated by $f_{6,8}$ and each of these conics $q_i$ there exists a cubic $g_i$ such that
\begin{equation}
\label{eq-122-1}
g_i^2+q_i^3=f_{6,8}
\end{equation}
for $i=1,2,3,4$.
After a Kummer covering of order two ramified along three generic lines, one obtains identities
\begin{equation}
\label{eq-122-2}
\tilde g_i^2+\tilde q_i^3=f_{12,2}
\end{equation}
for $i=1,2,3,4$.

\begin{prop}
The Mordell-Weil group $\cQ_{(2,3,6)}(C_{12,2})$ is generated (as a $\ZZ$-module) by $(\tilde g_i,\tilde q_i,-1)$, $i=1,2,3,4$.
\end{prop}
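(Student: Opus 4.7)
The plan is to verify that each triple lies in the Mordell--Weil group, count ranks, and then establish simultaneously that the four proposed generators are $\ZZ$-independent and span a primitive sublattice. Membership is immediate from equation~(\ref{eq-122-2}), which witnesses $(\tilde g_i,\tilde q_i,-1)\in \cQ_{(2,3,6)}(C_{12,2})$ for each $i$. By Proposition~\ref{propDeg4} and Theorem~\ref{thm-mw-qt}, the group has $\ZZ$-rank exactly 4, so four linearly independent generators would identify a finite-index subgroup, and any improvement to index 1 is a primitivity assertion.

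The key idea is to transport the problem to the base curve $C_{6,8}$. The four relations $(g_i,q_i,-1)\in \cQ_{(2,3,6)}(C_{6,8})$ coming from~(\ref{eq-122-1}) pull back under $\kappa_2$ to our four elements, and pullback of sections along the finite surjective map $\kappa_2$ is injective (a nontrivial polynomial identity remains nontrivial after substitution). By Example~\ref{ExaDeg6Alex4} combined with Proposition~\ref{prpRnkSyz}, the source group $\cQ_{(2,3,6)}(C_{6,8})$ already has $\ZZ$-rank 4, so the pullback embeds it as a finite-index subgroup of $\cQ_{(2,3,6)}(C_{12,2})$. It therefore suffices to show that $(g_i,q_i,-1)$, $i=1,\ldots,4$, generate $\cQ_{(2,3,6)}(C_{6,8})$ as a $\ZZ$-module.

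For $\ZZ$-independence on $C_{6,8}$, I would exploit that the four conics $q_1,\ldots,q_4$ are pairwise distinct and, by the combinatorics of the paired cusps, give four distinct $x$-coordinates on the elliptic curve $y^2=x^3+f_{6,8}(u,v,1)$ over $\CC(u,v)$. Restricting the elliptic threefold to a generic line $L\subset \bp^2$ yields an elliptic surface on which the four specialised sections have pairwise distinct $x$-coordinates; a height-pairing (N\'eron--Tate) computation, or equivalently a check that their images in a suitable N\'eron model have nontrivial and mutually independent contribution, upgrades the distinctness to $\ZZ$-independence.

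For primitivity, which is the main obstacle, I would adapt the argument of Proposition~\ref{prop-q121} based on~\cite[Proposition~4.4]{CogLib}: for each $i$, the pencil generated by $g_i^2$ and $q_i^3$ in $\CC[u,v,w]_{12}$ is primitive (no common non-unit factor), so the relation $(g_i,q_i,-1)$ cannot be written as a nontrivial multiple in the Mordell--Weil group; combined with the rank-4 equality and Step 2, this forces the subgroup generated by the four elements to coincide with the ambient group. The delicate point is that, with four generators in a group of rank exactly four, one cannot afford \emph{any} nontrivial index: the primitivity of each individual pencil must be coupled with the independence argument above, probably by showing that the $4\times 4$ intersection/height matrix of the four sections has determinant equal (up to sign) to that of a $\ZZ$-basis.
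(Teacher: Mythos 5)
There is a genuine gap, and it sits exactly where the whole difficulty of the proposition lies. Your reduction step claims: since the pullback of $\cQ_{(2,3,6)}(C_{6,8})$ embeds as a finite-index subgroup of $\cQ_{(2,3,6)}(C_{12,2})$ (both of rank $4$), ``it suffices to show that $(g_i,q_i,-1)$ generate $\cQ_{(2,3,6)}(C_{6,8})$.'' This is a non-sequitur: if the pullback has index $n>1$, then generators of the source only generate that index-$n$ subgroup of the target, not the target itself. The paper's own remark following Proposition~\ref{prop-q121} warns precisely that base change can enlarge the Mordell--Weil group (for $C_{6,6}\to C_{12,1}$ the rank jumps from $0$ to $2$, so pullback is certainly not surjective in general); equality of ranks here does not rule out a finite-index enlargement. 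So your reduction discards exactly the issue the proposition is about, namely controlling the index, and nothing you prove downstairs can recover it.

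The remaining steps are sketches rather than arguments, and you concede as much: the ``distinct $x$-coordinates plus a height-pairing computation'' independence claim is never executed (distinctness of sections implies nothing about independence), and the final index-$1$ step is flagged as ``the delicate point'' and left open. Note also that the strategy you outline there cannot work as stated: primitivity of each individual element (no $(g_i,q_i,-1)$ is a nontrivial multiple of another section) does \emph{not} imply the four together generate a primitive sublattice --- in $\ZZ^2$ the vectors $(2,1)$ and $(1,2)$ are each primitive yet span an index-$3$ sublattice. The paper's proof takes a different and shorter route: exactly as in Proposition~\ref{prop-q121}, it deduces generation directly from the primitivity of the four \emph{pencils} generated by $g_i^2$ and $q_i^3$ in \eqref{eq-122-1}, invoking \cite[Proposition~4.4]{CogLib}; that cited result is a statement about quasi-toric decompositions of $f$, which is strictly stronger than the elementwise non-divisibility you rely on, and it is what closes the index gap. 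Without that input (or a genuinely carried-out height/determinant computation), your proposal does not prove the proposition.
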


\begin{proof}
As in Proposition~\ref{prop-q121}, it is a consequence of the fact that the pencils 
described in~\eqref{eq-122-1} are primitive.
\end{proof}

\section{Deformations}

Let $C=V(f)$ be a cuspidal curve. Let $J$ denote the saturation of the Jacobian ideal of $C$, and let $I$ denote 
the ideal of the cusps of $C$. In particular, $\sqrt{J}=I$ and $\length \Proj S/J=2\length \Proj S/I$.

For $m\in \{0,1,2\}$ let $C_{12,m}$ as before. Let $d>1$ be an integer. Let $\kappa:\bp^2\to\bp^2$ be a 
general degree $d$ map. Let $C_{12d,m}$ be the pullback of $C_{12,m}$. Then $C_{12d,m}$ has $32d^2$ cusps.

Consider now the space $\mathcal{C}_{12d,m}$ of equisingular deformations of $C_{12d,m}$ in $\bp^2$. The 
tangent space to $\mathcal{C}_{12d,m}$ is given by $J_{12d}(C_{12d,m})$ the degree $12d$-part of $J(C_{12d,m})$~\cite{GL}. 
The dimension of this space can be read off from the minimal resolution of $J(C_{12d,m})$. Since $\kappa$ 
is general, this resolution is just the pullback of the minimal resolution of~$J(C_{12,m)}$.

To study the resolution of $J(C_{12d,m})$ note the following:
\begin{lem} Let $J$ be $J(C_{12d,m})$. Let $W$ be the hypersurface $y^2=x^3+\kappa^*(f)$ in $\bp(1,1,1,4d,6d)$ Then
\[ 2\#\Sigma-\dim (S/J)_{14d-3} = \frac{1}{2}\rank \MW(\pi).\]
 
\end{lem}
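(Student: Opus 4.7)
The plan is to express both sides of the claimed equality in terms of the Hilbert function of the ideal $I$ of cusps of $C_{12d,m}$, thereby reducing the lemma to a single numerical identity between graded dimensions.

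First, I would invoke Theorem~\ref{thm-mw-qt} to identify $\rank\MW(\pi)$ with the degree of the Alexander polynomial of the discriminant curve $C_{12d,m}=V(\kappa^{*}f)$. Since $C_{12d,m}$ is an irreducible cuspidal curve of degree $12d=6k$ with $k=2d$ and $\#\Sigma=32d^{2}$ cusps, Zariski's formula (\ref{rankFormula}) gives
\[
\frac{1}{2}\rank\MW(\pi)\;=\;\#\Sigma-\dim(S/I)_{10d-3}.
\]
Thus the lemma is equivalent to the numerical identity
\[
\dim(S/J)_{14d-3}\;=\;\#\Sigma+\dim(S/I)_{10d-3}.\qquad(\ast)
\]

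To prove $(\ast)$, I would use the short exact sequence of graded $S$-modules
\[
0\to I/J\to S/J\to S/I\to 0,
\]
coming from $J\subset I=\sqrt{J}$. Passing to the degree-$(14d-3)$ piece, $(\ast)$ splits into two claims: (a) $\dim(S/I)_{14d-3}=\#\Sigma$, and (b) $\dim(I/J)_{14d-3}=\dim(S/I)_{10d-3}$. Claim (a) is a Castelnuovo--Mumford regularity statement for the cusp ideal, which I would extract from the resolution of the cusp ideal of $C_{12,m}$; since $\kappa$ is general, this resolution lifts through $\kappa^{*}$ to a resolution of $I$ with all degree shifts scaled by $d$, and the resulting bound on $\operatorname{reg}(I)$ is comfortably smaller than $14d-3$.

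The substantive content is claim (b). Locally at each cusp $p$, the saturated Milnor algebra $S_{p}/J_{p}$ is a Gorenstein Artinian $\bc$-algebra of length $2$ (for an ordinary cusp, $\bc[x,y]/(x^{2},y)$), with $I_{p}/J_{p}$ its $1$-dimensional socle. Local Gorenstein duality yields a perfect pairing $(S/I)_{p}\otimes(I/J)_{p}\to\bc$. I would globalize these local pairings into a graded perfect pairing identifying $(S/I)(-4d)$ with $I/J$ in sufficiently high degree, induced by multiplication by a generic element of $S_{4d}$. The shift by $4d$ is natural: it is precisely the weight of the coordinate $x$ in the Weierstrass equation $y^{2}=x^{3}+\kappa^{*}f$ for $W\subset\bp(1,1,1,4d,6d)$. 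The main obstacle will be establishing this globalization and, in particular, the nondegeneracy of the pairing in the specific degree $14d-3$. The cleanest route is likely through a syzygy computation in the spirit of Proposition~\ref{prpRnkSyz} applied to the pulled-back resolution of $I$; alternatively, one can reinterpret the identity as an infinitesimal Hodge-theoretic statement on $W$, in which the shift by $4d$ emerges naturally from the weighted-projective structure and the techniques of \cite{KloSyz}.
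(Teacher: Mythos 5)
Your reduction is sound: with the exact sequence $0\to I/J\to S/J\to S/I\to 0$, your claim (a) follows from Proposition~\ref{prpRnkSyz} (all syzygy degrees $b_i$ of $I$ are at most $5k=10d$, so the Hilbert function of $S/I$ has reached its stable value $\#\Sigma$ by degree $10d-2\leq 14d-3$), and, given (a), your claim (b) is indeed equivalent to the lemma. The gap is in your mechanism for proving (b). For multiplication by $h\in S_{4d}$ to send $(S/I)_{10d-3}$ into $(I/J)_{14d-3}$ you need $h\,S_{10d-3}\subseteq I$, which forces $h$ to vanish at every cusp, i.e.\ $h\in I_{4d}$. But $I_{4d}=0$: by Proposition~\ref{prpRnkSyz}, $32d^2=\#\Sigma\leq 3k\min(a_i)=6d\min(a_i)$, so every generator of $I$ has degree at least $16d/3>4d$. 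Equivalently, since $(J:I)=I$ (check locally at a cusp, where $J_p=(\ell,y^2)$ and $I_p=(\ell,y)$), one has
\[ \Hom_S(S/I,I/J)_{4d}\cong\left\{\bar h\in (I/J)_{4d}\;\middle|\; I\bar h=0\right\}=(I/J)_{4d}=I_{4d}/J_{4d}=0. \]
So there is no nonzero map of the kind you want, generic or otherwise; the proposed globalization cannot exist.

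The deeper problem is that claim (b) is not a local or duality statement at all. At each cusp $(S/I)_p$ and $(I/J)_p$ are both one-dimensional, so the two modules are abstractly isomorphic as sheaves on $\Sigma$; Gorenstein duality of the length-two Tjurina algebra adds nothing beyond this. The entire content of (b) is that two \emph{defects} agree in two particular degrees, namely $\#\Sigma-\dim(I/J)_{14d-3}=\#\Sigma-\dim(S/I)_{10d-3}$, and that equality is exactly as deep as the lemma itself; your fallback suggestions are left unexecuted, and the second one (an ``infinitesimal Hodge-theoretic statement on $W$'') is in substance the paper's actual proof. The paper argues through the threefold: $\rank \MW(\pi)=h^4(W)-1$ by \cite{ell3HK}; each cusp gives a $D_4$-point of $W$, and Rams' defect formula \cite{Rams} yields
\[ h^4(W)-1=\dim\coker\left(xS_{10d-3}\oplus S_{14d-3}\to \oplus_{p\in\Sigma}\CC^3\right), \]
whose right-hand side splits as $\left(\#\Sigma-\dim(S/I)_{10d-3}\right)+\left(2\#\Sigma-\dim(S/J)_{14d-3}\right)$; comparing with Zariski's formula (\ref{eqnCok}) then gives the lemma. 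The weight of $x$ in $\bp(1,1,1,4d,6d)$ is $4d$, so the shift you observed is real, but it enters through the cohomology of $W$, not through a module-theoretic pairing; some geometric input of this kind appears unavoidable.
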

\begin{proof}
From \cite{ell3HK} it follows that the Mordell-Weil rank of $y^2=x^3+f$ equals $h^4(W)-1$. Each cusp of $C$ 
yields a $D_4$-singularity of $W$. Following Rams \cite{Rams} we obtain 
\[h^4(W)-1 = \dim \coker \left(x S_{10d-3}\oplus S_{14d-3} \to \oplus_{p\in \Sigma} \bc^3\right).\]
Moreover $xf\oplus g$ is mapped to $(f(p),g(p),\frac{d}{d\ell} g(p))$, where $\ell^2$ is the generator of the 
ideal of the tangent cone of $C$ at $p$. In particular, we have that $h^4(W)-1$ equals
\[ \#\Sigma -\dim(S/I)_{10d-3}+2\# \Sigma-\dim (S/J)_{14d-3}.\]
Since $h^4(W)-1=\rank \MW(\pi)$ and $\frac{1}{2}\rank \MW(\pi)=\# \Sigma-\dim(S/I)_{10d-3}$, by~(\ref{eqnCok}) 
it follows that $2\#\Sigma-\dim (S/J)_{14d-3}=\frac{1}{2}\rank \MW(\pi)$.
\end{proof}

\begin{lem} The minimal resolution of $J^{(m)}:=J(C_{12,m})$ has only syzygies and generators of degree up to 14. 
None of the generators of syzygies has degree 13, and there are precisely $m$ syzygies of degree~14.
\end{lem}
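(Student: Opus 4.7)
The plan is to combine the Hilbert--Burch structure theorem for saturated Cohen--Macaulay ideals in three variables with the Hilbert function value $\dim(S/J^{(m)})_{11} = 64 - m$ supplied by the preceding lemma (specialized to $d=1$), together with bounds on the Castelnuovo--Mumford regularity of the Jacobian ideal of a cuspidal plane curve.

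First I would note that $J^{(m)}$ is saturated and defines a $0$-dimensional subscheme of $\bp^2$ of length $64$ (each of the $32$ cusps of $C_{12,m}$ contributing local Tjurina number $2$), so $S/J^{(m)}$ is Cohen--Macaulay and admits a length-$2$ minimal free resolution
\[ 0 \to \bigoplus_i S(-d_i) \to \bigoplus_j S(-c_j) \to S \to S/J^{(m)} \to 0, \]
satisfying $\sum c_j = \sum d_i$ and $\tfrac{1}{2}(\sum d_i^2 - \sum c_j^2) = 64$; the partials $f_u, f_v, f_w$ contribute generators in degree $11$. The preceding lemma gives $\dim J^{(m)}_{11} = 14+m$, and by monotonicity of the Hilbert function (since $S/J^{(m)}$ is Cohen--Macaulay of dimension one) I would extract the remaining low-degree Hilbert function values. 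A bound of $14$ on the shifts of the resolution can then be established, either via a general regularity bound for Jacobian ideals of cuspidal curves of degree $12$, or by verifying directly that $\dim(S/J^{(m)})_{12} = 64$, which forces $\mathrm{reg}(S/J^{(m)}) \leq 12$ and hence $c_j, d_i \leq 14$.

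With the degree bounds in place, the three numerical relations (cardinality, degree sum, and length) become a linear system on the Betti numbers; solving it shows that the contributions in degrees $13$ and $14$ are rigidly coupled to $m$ -- in particular, any generator of degree $14$ must be matched by an extra syzygy of degree $14$, and the discrepancy between generators and syzygies in degree $13$ is forced to be a specific function of $m$. The main obstacle is then to establish the exact vanishing assertions of the lemma, namely that there are no syzygies of degree $13$ and precisely $m$ syzygies of degree $14$. I expect the argument to proceed either by a direct computation of the minimal free resolution of $J^{(m)}$ -- reducing modulo a suitable prime when the field of definition is too large, in the spirit of Lemma~\ref{lemRedEqn} -- or by a geometric argument showing that a hypothetical minimal syzygy of degree $13$ would yield an algebraic relation on the cusp configuration which is precluded by the pullback structure of $C_{12,m}$ under the Kummer covers from $\cC_{6,6}$ or $\cC_{6,8}$.
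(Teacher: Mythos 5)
Your skeleton --- Hilbert--Burch for the saturated, length-$64$ Jacobian scheme, the input $\dim(S/J^{(m)})_{11}=64-m$ from the preceding lemma, and the remark that once the shifts are bounded the degree-$14$ syzygy count is rigidly tied to $m$ --- is sound, and that last deduction is essentially the paper's second step (there the count of degree-$14$ syzygies equals $2\#\Sigma-\dim(S/J^{(m)})_{11}=m$, following \cite{KloSyz}; note this needs the degree bounds first, and does not follow from the three numerical relations alone). The first genuine gap is your route to the degree bound itself. The option ``verify that $\dim(S/J^{(m)})_{12}=64$'' is cheap only for $m=0$ (monotonicity) and $m=1$ (in two variables, Macaulay's growth bound forces the $h$-vector tail, of total weight $1$, into degree $12$); for $m=2$ the tail could a priori be $(1,1)$ in degrees $12$ and $13$ --- a perfectly admissible $O$-sequence --- which would give $\mathrm{reg}(J^{(2)})=14$ and shifts up to $15$, so the desired equality does not follow from the numerics you have and would itself require a computation. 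And there is no off-the-shelf ``general regularity bound for Jacobian ideals of cuspidal curves'' to cite: the bound (syzygies of degree at most $7k$, here $14$) \emph{is} the thing being proved. The paper obtains it by a quite different, geometric mechanism: if some generator or syzygy of $J^{(m)}$ had degree greater than $14$, then pulling the resolution back under general degree-$d$ base changes would make the Mordell--Weil rank of $C_{12d,m}$ grow like $cd^2$, which contradicts Libgober's divisibility theorem (or the Shioda--Tate formula), exactly as in \cite[Proposition 3.3]{KloSyz}.

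The second gap is that the assertions you yourself call ``the main obstacle'' --- no generators or syzygies in degree $13$, and exactly $m$ syzygies in degree $14$ --- are the actual content of the lemma, and your proposal only names two candidate strategies without carrying either out. The paper settles degree $13$ case by case: for $m=2$ the saturated Jacobian ideal is the pullback of $J(C_{6,8})$ under the order-$2$ Kummer cover, so its minimal resolution is the pullback of the sextic's resolution and every shift is \emph{even}, killing degree $13$ (this parity argument also supplies the degree bound for $m=2$, precisely where your regularity route breaks down); for $m=0,1$ the resolution is computed with Singular. So your computational fallback coincides with what the paper does for $m=0,1$, and the ``geometric argument from the pullback structure'' you gesture at is this parity argument, which is only available for $m=2$. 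Finally, one outright error: the partials do \emph{not} contribute minimal generators of degree $11$ when $m=2$; by the parity just described all minimal generators of $J^{(2)}$ have even degree, and the degree-$11$ partials of $\kappa^*f_{6,8}$ are non-minimal multiples of lower even-degree generators.
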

\begin{proof}
Combining the previous lemma with the argument used in the proof of~\cite[Proposition 3.3]{KloSyz} one obtains 
that if at least one of the syzygies or generators had degree greater than 14, then the Mordell-Weil rank of 
$C_{12d,m}$ would be asymptotically of the form $cd^2$, for some positive constant $c$. This is impossible by 
either Libgober's global divisibility theorem for Alexander polynomials or by the Shioda-Tate formula for 
elliptic surfaces (cf. the proof of~\cite[Proposition 3.3]{KloSyz}). 

Hence all syzygies are of degree at most 14 and all generators are of degree at most 13. Similarly, as in the 
proof of \cite[Proposition 3.3]{KloSyz}, one obtains that the number of syzygies of maximal degree, i.e. degree 14 equals
\[ 2\# \Sigma -\dim (S/J^{(m)})_{11}=\frac{1}{2}\rank \MW(\pi)=m.\]

The claim for degree 13 requires that each $m$ be done separately. If $m=2$, then the resolution of 
$J^{(m)}$ is the pullback of a resolution under a degree 2 cover. Hence all the generators and syzygies have even degrees.

For the case $m=0,1$ we compute a resolution of $J$ using Singular~\cite{DGPS}. 
It turns out that none of the generators or syzygies has degree~13.
\end{proof}

\begin{prop}\label{prpTDim} The tangent space of $\mathcal{C}_{12d,m}$ has codimension
 \[ 2\# \Sigma -m(2d-1)(d-1) \]
in $S_{12d}$.
\end{prop}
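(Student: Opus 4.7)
The plan is to compute the Hilbert function $H(n):=\dim(S/J(C_{12d,m}))_n$ at $n=12d$ directly from the minimal free resolution of the saturated Jacobian ideal, since by the discussion just before the proposition the codimension in question equals $H(12d)$. The previous lemma records the shape of the minimal free resolution of $J(C_{12,m})$, and as noted above, the generality of $\kappa$ ensures that its pullback is a minimal free resolution of $J(C_{12d,m})$ with every twist multiplied by $d$. Thus
\[
0 \to \bigoplus_i S(-db_i) \to \bigoplus_j S(-d a_j) \to S \to S/J(C_{12d,m}) \to 0,
\]
with $a_j\le 12$ for every $j$, $b_i\le 14$ for every $i$, no $a_j$ or $b_i$ equal to $13$, and exactly $m$ of the $b_i$ equal to $14$. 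Taking Euler characteristics yields
\[
H(n) = \binom{n+2}{2} - \sum_j \dim S_{n - d a_j} + \sum_i \dim S_{n - d b_i}.
\]

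I would then compare $H(12d)$ with the Hilbert polynomial $P(n)$ of $S/J(C_{12d,m})$. Since $\length \Proj S/J(C_{12d,m})=2\#\Sigma=64d^2$ and $\Proj S/J(C_{12d,m})$ is zero-dimensional, the Hilbert polynomial is the constant $P(n)=64d^2$. Formally this polynomial arises from the same Euler characteristic if one replaces each $\dim S_{n-c}$ by the polynomial $\binom{n-c+2}{2}=(n-c+2)(n-c+1)/2$, valid for every integer value of $n-c$, giving the polynomial identity
\[
64d^2 = \binom{n+2}{2} - \sum_j \binom{n-d a_j+2}{2} + \sum_i \binom{n-d b_i +2}{2}.
\]

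Evaluating at $n=12d$ and subtracting, the discrepancy between $H(12d)$ and $P(12d)$ arises only from those terms in which the argument $n - d a_j$ or $n - d b_i$ is negative, where $\dim S_k$ vanishes while the polynomial extension $(k+2)(k+1)/2$ need not. Since $a_j\le 12$, one has $12d - d a_j \ge 0$ for every generator, and likewise $12d-db_i\ge 0$ for every syzygy with $b_i\le 12$, so these contribute no discrepancy. The only discrepancy comes from the $m$ syzygies with $b_i=14$, each of which contributes $0 - \binom{-2d+2}{2} = -(d-1)(2d-1)$ to $H(12d)-P(12d)$. Summing,
\[
H(12d) = 64 d^2 - m(2d-1)(d-1) = 2\#\Sigma - m(2d-1)(d-1),
\]
which is the claimed codimension. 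The main point requiring care is to separate the true dimension $\dim S_k$ (zero for $k<0$) from its polynomial extrapolation $\binom{k+2}{2}$; with that in hand, the evaluation $\binom{-2d+2}{2}=(d-1)(2d-1)$ and the identification of the discrepancy are immediate.
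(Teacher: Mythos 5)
Your proposal is correct and follows essentially the same route as the paper: the paper likewise identifies the tangent space with $J_{12d}(C_{12d,m})$, pulls back the minimal resolution of $J(C_{12,m})$ (using the previous lemma for the degrees, with exactly $m$ syzygies of degree $14d$ and nothing else above degree $12d$), and then invokes ``a standard exercise in commutative algebra'' expressing the difference between $\length \Proj S/J(C_{12d,m})$ and $\codim J_{12d}(C_{12d,m})$ via the syzygies of degree greater than $12d$. Your Hilbert function versus Hilbert polynomial comparison, with the discrepancy $\binom{-2d+2}{2}=(d-1)(2d-1)$ per degree-$14d$ syzygy, is precisely that exercise carried out explicitly.
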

\begin{proof}
The tangent space to $\mathcal{C}_{12d,m}$ is given by $J_{12d}(C_{12d,m})$, the degree $12d$-part of $J(C_{12d,m})$. 
A minimal resolution of $J(C_{12d,m})$ can be pulled back from a minimal resolution of $J(C_{12,m})$. From the 
previous lemma it follows now that a minimal resolution of $J(C_{12d,m})$ has $m$ syzygies of degree $14d$ and no 
further syzygies or generators of degree at least~$12d+1$.

A standard exercise in commutative algebra expresses the difference between the length of $\Proj S/J(C_{12d,m})$ 
and $\codim J_{12d}(C_{12d,m})$ in terms of the generators and syzygies of degree greater than $12d$. From this we obtain that
\[ \codim J_{12d}(C_{12d,m})-\length(\Proj S/J(C_{12d,m})= m(2d-1)(d-1).\]
\end{proof}

To calculate the codimension of $\mathcal{C}_{12d,m}$ we treat the case $m=2$ separately.

\begin{lem}\label{lemdimcountDeg4} Consider the locus  
\[ \{f\in S_{6k} \mid V(F) \mbox{ has } 8k^2 \mbox{ cusps and } \rank \MW(\pi)=4\}.\]
Then this locus contains a component of codimension at most $16k^2-(k-1)(k-2)$.
\end{lem}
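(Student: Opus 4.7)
Following the strategy of Proposition~\ref{prpTDim} but starting from the sextic $C_{6,8}$ rather than the degree-$12$ curves, fix a general rational map $\kappa:\bp^2\to\bp^2$ of degree $k$ and set $C_{6k,2}:=\kappa^*(C_{6,8})$. This is an irreducible curve of degree $6k$ with $8k^2$ cusps. Since the branch locus of $\kappa$ meets $C_{6,8}$ transversely and avoids its cusps (exactly as in the proof of Proposition~\ref{propDeg4}), pulling back the minimal resolution of $I(\Sigma(C_{6,8}))$ from Example~\ref{ExaDeg6Alex4} gives a minimal resolution of $I(\Sigma(C_{6k,2}))$ whose two top syzygies have degree $5k$, so Proposition~\ref{prpRnkSyz} yields $\rank\MW(\pi)=4$. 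Thus $C_{6k,2}$ lies in the locus of the lemma, and it remains to bound the codimension of its equisingular deformation space $\cC_{6k,2}$.

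The rest of the proof parallels Proposition~\ref{prpTDim}, the crucial input being the structural claim that the minimal free resolution of $J(C_{6,8})$ has generators and syzygies of degree at most $7$, with exactly two syzygies of top degree $7$. This follows from the lemma preceding Proposition~\ref{prpTDim} together with the fact that $C_{12,2}$ was built in Subsection~\ref{subsec-68} as a general degree-$2$ Kummer pullback of $C_{6,8}$: the same transversality argument gives $J(C_{12,2})=\kappa_2^*J(C_{6,8})$ with minimal resolutions related by $\kappa_2^*$, so halving the known degrees for the minimal resolution of $J(C_{12,2})$ (namely $\leq 14$, with exactly two syzygies of degree $14$) gives the claim. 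Pulling this back by our general $\kappa$ of degree $k$ then produces a minimal resolution of $J(C_{6k,2})=\kappa^*J(C_{6,8})$ whose generator/syzygy degrees are multiples of $k$ bounded by $7k$, with exactly two syzygies of degree $7k$; in particular, the only generators or syzygies of degree $>6k$ are those two top syzygies.

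Applying the commutative-algebra identity used in Proposition~\ref{prpTDim},
\[
\length\Proj(S/J(C_{6k,2})) - \dim(S/J(C_{6k,2}))_{6k}
 = 2 \cdot \frac{(7k-6k-1)(7k-6k-2)}{2} = (k-1)(k-2),
\]
and since $\length\Proj(S/J(C_{6k,2})) = 2 \cdot 8k^2 = 16k^2$, the tangent space $J_{6k}(C_{6k,2})$ to $\cC_{6k,2}$ has codimension $16k^2-(k-1)(k-2)$ in $S_{6k}$. Because equisingular deformations preserve the Alexander polynomial and hence the Mordell--Weil rank, $\cC_{6k,2}$ is contained in the locus of the lemma, and under the same unobstructedness hypothesis implicit in Proposition~\ref{prpTDim} its closure furnishes a component of the required codimension. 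The main obstacle I foresee is precisely this last step: identifying the codimension of $\cC_{6k,2}$ with that of its tangent space, which requires the (standard but nontrivial) unobstructedness of the equisingular deformation functor for plane curves with only $A_2$ singularities in this configuration.
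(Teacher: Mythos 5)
There is a genuine gap, and it is exactly the one you flag at the end --- but it is more serious than ``standard but nontrivial.'' Your argument computes the codimension of the \emph{tangent space} $J_{6k}(C_{6k,2})$ to the equisingular stratum at the single curve $C_{6k,2}=\kappa^*(C_{6,8})$, and that computation (via the two degree-$7k$ syzygies of $J$) is correct and yields $16k^2-(k-1)(k-2)$. However, the tangent-space bound gives the inequality in the \emph{wrong direction}: since $\dim \mathcal{C}_{6k,2}\leq \dim J_{6k}(C_{6k,2})$, you only obtain that the codimension of the stratum is \emph{at least} $16k^2-(k-1)(k-2)$, whereas the lemma asserts the existence of a component of codimension \emph{at most} that number. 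To convert your computation into the required upper bound you would need smoothness (unobstructedness) of the equisingular deformation space at $C_{6k,2}$. This is not a standard fact one can invoke: by Wahl's classical example, equisingular families of cuspidal plane curves can be obstructed, and the usual sufficient criterion (vanishing of $h^1$ of the twisted equisingular ideal sheaf) fails here \emph{precisely because} of the superabundance you computed --- indeed $h^1=(k-1)(k-2)>0$ for $k\geq 3$, which is the very phenomenon the lemma is about. So the last step cannot be patched by a citation; it is the actual content to be proved.

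The paper sidesteps obstruction theory entirely by exhibiting an explicit parametrized family: for $(f_1,f_2,f_3)\in S_{2k}\times S_k\times S_k$ it writes down $w_i,v_i$ with $f:=v_1^2-w_1^3=v_2^2-w_2^3$, checks that for generic parameters $V(f)$ has exactly $8k^2$ cusps whose ideal has resolution
\[ 0 \to S(-5k)^2\to S(-4k)\oplus S(-3k)^2\to S\to S/I\to 0, \]
hence Mordell--Weil rank $4$ by Proposition~\ref{prpRnkSyz}, and then shows the parametrization $S_{2k}\times S_k\times S_k\to S_{6k}$ has generically finite fibers (using that the Mordell--Weil group of $y^2=x^3+f$ is finitely generated when $f$ is not a sixth power, so the fiber cannot contain a curve). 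A direct dimension count, $\dim(S_{2k}\times S_k\times S_k)=3k^2+6k+3=\dim S_{6k}-16k^2+(k-1)(k-2)$, then gives a subset of the locus of that dimension, hence a component of codimension at most $16k^2-(k-1)(k-2)$. In short: the paper proves the upper bound by \emph{constructing} enough curves, and reserves the tangent-space argument (your computation, essentially Proposition~\ref{prpTDim}) for the matching lower bound in the final Proposition. Your first paragraph (the pullback curve $C_{6k,2}$ with $8k^2$ cusps and rank $4$) is fine and mirrors Proposition~\ref{propDeg4}, but it only produces one point of the locus, not a positive-dimensional family.
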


\begin{proof} Fix polynomials $f_1\in S_{2k}$ and $f_2,f_3\in S_k$. Let $\eta$ be a square root of $-3$. Define
 \[\begin{array}{c} w_1:=f_1-f_2f_3,\; w_2:=f_1+f_2f_3, \\
v_1:=\frac{1}{2}f_2f_3^2-\eta f_1f_2-f_2^2f_3-\frac{\eta}{2}f_1f_3, \\
v_2:=\frac{1}{2}f_2f_3^2+\eta f_1f_2+f_2^2f_3-\frac{\eta}{2}f_1f_3.\end{array}\]
Set $f:=v_1^2-w_1^3$. By construction $f=v_2^2-w_2^3$. Hence for general $f_1,f_2,f_3$, the points $P_i=(w_i,v_i)$ 
yield different points on the elliptic curve given by $v^2=w^3+f$. One can actually show that the 
$P_1,P_2,\xi^* P_1, \xi^* P_2$ generate a subgroup of rank 4, where $\xi^*(w,v)=(\xi w,-v)$ and $\xi$ is a 
primitive sixth root of unity (this fact is not required for the proof).

If the $f_i$'s are chosen to be generic, then $\Sigma$ contains $V(v_1,w_1) \cup V(v_2,w_2)$. This locus can also be described as
\[ V(f_1,f_2f_3) \cup V(f_1-f_2f_3, 4f_2+(\eta+1)f_3) \cup V(f_1+f_2f_3, 4f_2+(\eta-1)f_3).\]
For generic $f_i$ this defines $8k^2$ points.

We claim that for generic $f_i$'s the curve $V(f)$ has precisely $8k^2$ cusps. Since the Milnor number of a cusp is 2, 
it suffices to give an example where the length of $S/J(f)$ is $16k^2$. If $k=1$ this can be done by taking 
$f_1=x,f_2=y$ and $f_0=x^2+y^2+z^2$. For a general $k$ we can take a generic degree $k$ base change of the $k=1$ 
example and obtain a curve with precisely $8k^2$ cusps.

From the above description of the locus of the cusps, it follows that $\Sigma$ is the union of a complete 
intersection of two degree $2k$ curves say $Q_1'=f_1$ and $Q_2'=f_2f_3$ and two complete intersections of two degree $k$ curves $T_i=R_i=0$, with
$T_i=Q_1'+(-1)^i Q_2'$ and $R_i=4f_2+(\eta-(-1)^i)f_3)$. Hence the ideal of $\Sigma$ is generated by $T_1T_2,R_1T_2,R_2T_1$, 
i.e. the generators have degrees $4k,3k,3k$. There are two obvious syzygies of degree $5k$. Hence the resolution 
of the ideal of the cusps equals 
\[ 0 \to S(-5k)^2\to S(-4k)\oplus S(-3k)^2\to S\to S/I\to 0.\]
In particular, we have an example of a degree $6k$ curve with $8k^2$ cusps and $\Delta(t)=(t^2-t+1)^2$.

We want to calculate the dimension of the space of degree $6k$ curves of the above form. The dimension of 
$S_{2k}\times S_k\times S_k$ equals $3k^2+6k+3$. This quantity can be rewritten as $\dim S_{6k}-16k^2+(k-1)(k-2)$.

Consider now the map $S_{2k}\times S_k\times S_k \to S_{6k}\times S_{2k}\times S_{2k} \to S_{6k}$. The first map 
is defined by $(f_0,f_1,f_2)\mapsto (f,f_0-f_1f_2,f_0+f_1f_2)$ and the second map is the projection on the first factor. 
The fiber over a general point $P\in S_{6k}\times S_{2k}\times S_{2k}$ in the image of the first map is finite since
we can obtain $f_0$ and $f_1f_2$ from $P$, which leaves only finitely many possibilities for $f_1$ and~$f_2$.

Take now a point $f\in S_{6k}$ in the image of the composition. We claim that either there are only finitely many 
points in the image of the first map mapping to $f$ or $f$ is of the form $h^6$ with $h\in S_k$. Indeed, suppose 
that the fiber of $f$ intersected with the image of the first map has a positive dimensional component. Then we 
have a map from a complex curve to the locus $L:=\{(P,Q)\in S_{2k}\times S_{3k} \mid Q^2-P^3=F\}$. Hence we would 
have a map from this curve to the Mordell-Weil group of $y^2=x^3+f$. It is well known that this group is finitely 
generated if and only if $f$ is not a sixth power in $S_{\bullet}$. In the former case there are at most countably 
many points in $L$, and therefore $L$ cannot contain a complex curve. In particular the general fiber of the 
composition $S_{2k}\times S_k \times S_k \to S_{6k}$ is finite.
\end{proof}

\begin{prop} The codimension of $\mathcal{C}_{12d,m}$ in $S_{12d}$ equals
 \[ 2\# \Sigma -m(2d-1)(d-1) .\]
\end{prop}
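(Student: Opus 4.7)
The plan is to combine an upper bound on $\dim\mathcal{C}_{12d,m}$ coming from the tangent-space computation with a matching lower bound obtained from an explicit family construction. Since the dimension of an algebraic variety is bounded above by the dimension of its Zariski tangent space at a chosen point, Proposition~\ref{prpTDim} gives $\codim \mathcal{C}_{12d,m} \geq 2\#\Sigma - m(2d-1)(d-1)$ for free; for the reverse inequality one must exhibit a family inside $\mathcal{C}_{12d,m}$ of dimension at least $\dim S_{12d} - 2\#\Sigma + m(2d-1)(d-1)$. The three values of $m$ are handled by three different but closely related arguments.

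The case $m=2$ is handled by Lemma~\ref{lemdimcountDeg4} applied with $k=2d$: it produces an explicit $3(k+1)^2 = 12d^2+12d+3$-dimensional family, parametrised by $(f_1,f_2,f_3)\in S_{4d}\times S_{2d}\times S_{2d}$, of degree-$12d$ cuspidal curves with $8k^2=32d^2$ cusps and two independent quasi-toric relations, which sits inside $\mathcal{C}_{12d,2}$ with exactly the required dimension. The case $m=0$ involves no defect: the tangent-space codimension already equals the naive cusp count $2\#\Sigma$, and standard Severi-type lower bounds on the dimension of the equisingular deformation space of a cuspidal plane curve give $\dim\mathcal{C}_{12d,0}\geq \dim S_{12d}-2\#\Sigma$.

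For $m=1$, I would parametrise by pairs $(h_1,h_2)\in S_{6d}\times S_{4d}$, a space of dimension $26d^2+15d+2$, and consider $\Phi\colon(h_1,h_2)\mapsto -(h_1^2+h_2^3)\in S_{12d}$. For generic $(h_1,h_2)$ the curve $V(\Phi(h_1,h_2))$ has exactly the $24d^2$ cusps of the complete intersection $V(h_1,h_2)$ by B\'ezout, so $V(\Phi(h_1,h_2))$ acquires the required $32d^2$ cusps precisely on the sublocus $X\subset S_{6d}\times S_{4d}$ where $8d^2$ further cusps appear away from $V(h_1,h_2)$. Each such extra cusp at a point $p$ with $h_1(p)h_2(p)\neq 0$ imposes four conditions on the triple $(h_1,h_2,p)$: namely $f(p)=0$, the two independent conditions $\nabla f(p)=0$ (after using Euler), and the Hessian degeneracy that distinguishes an $A_2$ from an $A_1$ singularity. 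After projecting out the two parameters of $p\in\bp^2$ this gives codimension $2$ per extra cusp on $(h_1,h_2)$, so $X$ has codimension $16d^2$ and dimension $10d^2+15d+2$. Since the Mordell-Weil group of $y^2=x^3+f$ is finitely generated of rank $2$, the fibres of $\Phi$ are discrete, whence $\Phi(X)\subset\mathcal{C}_{12d,1}$ has dimension $10d^2+15d+2$, matching the required bound. The main obstacle is verifying that these $8d^2$ extra-cusp conditions are indeed independent, so that $X$ has the expected codimension $16d^2$; the natural route is a tangent-space computation at the explicit point $(h_1,h_2)=(\kappa^*\tilde h_1,\kappa^*\tilde h_2)$, for $\kappa$ a generic degree-$d$ cover and $(\tilde h_1,\tilde h_2)$ the quasi-toric data of $C_{12,1}$, using the resolution of the Jacobian ideal from the preceding lemma.
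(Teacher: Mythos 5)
Your proposal reproduces the paper's own proof in all essentials: the lower bound on the codimension via the tangent space (Proposition~\ref{prpTDim}), the two-conditions-per-cusp count for $m=0$, the quasi-toric parametrisation $(g,h)\mapsto g^2+h^3$ with finite fibres plus two conditions per extra cusp for $m=1$ (the paper does this count in the image of the parametrisation, you do it upstairs in $S_{6d}\times S_{4d}$; the two are equivalent precisely because of the finiteness of the fibres, which both you and the paper deduce from finite generation of the Mordell--Weil group), and Lemma~\ref{lemdimcountDeg4} with $k=2d$ for $m=2$, together with the observation that $C_{12d,1}$, resp.\ $C_{12d,2}$, lies on the relevant locus by Proposition~\ref{prop-q121}, resp.\ the construction of $C_{12,2}$. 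Your dimension counts ($10d^2+15d+2$ for $m=1$, $12d^2+12d+3$ for $m=2$) agree with the paper's.

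The one point to correct is your closing sentence: the ``main obstacle'' you identify is not an obstacle, and no tangent-space verification of independence is needed. The incidence variety of tuples $(h_1,h_2,p_1,\dots,p_{8d^2})$ with a cusp at each $p_i$ is cut out by $4\cdot 8d^2$ equations, so by Krull's principal ideal theorem \emph{every} irreducible component of it has codimension at most $32d^2$ in $(S_{6d}\times S_{4d})\times(\bp^2)^{8d^2}$; projecting (with finite fibres, since a plane curve has only finitely many cusps) gives $\dim X\geq 10d^2+15d+2$ for every component of $X$, in particular for the one through the point corresponding to $C_{12d,1}$. This one-sided inequality is all the proof uses: the matching upper bound on $\dim\mathcal{C}_{12d,1}$ is exactly Proposition~\ref{prpTDim}. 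Independence of the extra-cusp conditions (equivalently, that the codimension of $X$ is exactly $16d^2$) is a \emph{consequence} of combining the two bounds, not an input that must be checked beforehand. The same remark applies to your $m=2$ case, where Lemma~\ref{lemdimcountDeg4} likewise only asserts ``codimension at most.''
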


\begin{proof} By Proposition~\ref{prpTDim} we know that the codimension is at least $ 2\# \Sigma -m(2d-1)(d-1)$. 
Requiring a cusp yields two conditions on a polynomial. Hence we know that $\codim \mathcal{C}_{12d,m}\leq 2 |\Sigma|$. 
This finishes the case $m=0$.

Suppose now that $m=1$. Then from Proposition~\ref{prop-q121} it follows that $k^*(f)=g^2+h^3$ for some $g\in S_{6d}, h\in S_{4d}$.
Now $\dim S_{12d}-\dim S_{4d}-\dim S_{6d}=46d^2+3d-1$. The map $S_{4d}\to S_{6d}\to S_{12d}$ mapping $(h,g)$ to 
$g^2+h^3$ has finite fibers. This follows from the same  argument as in the final paragraph of the proof of 
Lemma~\ref{lemdimcountDeg4}. A general element of this locus has $24d^2$ cusps. Hence any component of the locus 
of curves of the form $g^2+h^3$ having $32d^2$ cusps has codimension at most $46d^2+3d-1+16d^2=2\# \Sigma-(2d^2-3d+1)$, 
which finishes the case~$m=1$.

Suppose now that $m=2$. Then $C_{12d,m}$ lies on the component constructed in the proof of Lemma~\ref{lemdimcountDeg4}. 
This finishes the case~$m=2$.
\end{proof}


\begin{thebibliography}{99}
\bibitem{swbmn}
E.~Artal, J.~Carmona, and J.I.~Cogolludo, 
\emph{On sextic curves with big {M}ilnor number}, Trends in
  Singularities (A.~Libgober and M.~Tib{\u a}r, eds.), Trends in Mathematics,
  Birkh{\"a}user Verlag Basel/Switzerland, 2002, pp.~1--29.

\bibitem{CogLib}
J.I. Cogolludo and A.~Libgober, \emph{{M}ordell-{W}eil groups of elliptic threefolds 
and the Alexander module of plane curves}, Preprint available at
  \texttt{arXiv:1008.2018v2 [math.AG]}.
  
\bibitem{DGPS}
W.~Decker, G.-M.~Greuel, G.~Pfister, and H.~Sch{\"o}nemann, 
\newblock {\sc Singular} {3-1-3} --- {A} computer algebra system for polynomial computations.
\newblock {http://www.singular.uni-kl.de} (2011).

\bibitem{GL}
	G.-M.~Greuel, C.~Lossen, The Geometry of Families of Singular Curves. In: D. Siersma, C.T.C. Wall, V. Zakalyukin: 
\emph{New developments in Singularity Theory.} NATO Science Series, Kluwer (2001), 159--192. 

\bibitem{ell3HK}
K.~Hulek and R.~Kloosterman, \emph{Calculating the Mordell-Weil rank of elliptic threefolds and the cohomology 
of singular hypersurfaces}. Annales de l'Institut Fourier \textbf{61} (2011), no.~3, 1133--1179.

\bibitem{KloSyz} 
R.~Kloosterman, \emph{Cuspidal plane curves, Mordell-Weil rank of elliptic 
threefolds, syzygies and Alexander polynomials.} Preprint available at \texttt{arxiv:1107.2043v1 [math.AG]}.

\bibitem{oka-pho-class}
M.~Oka and D.T.~Pho, 
\emph{Classification of sextics of torus type}. 
Tokyo J. Math. \textbf{25} (2002), no.~2, 399--433.

\bibitem{Rams}
S. Rams, 
\emph{Defect and Hodge numbers of hypersurfaces}.
 Adv. Geom. \textbf{8} (2008), no.~2, 257--288.

\bibitem{Zariski}
 O.~Zariski,
\emph{On the irregularity of cyclic multiple planes}
Ann. of Math. (2)
\textbf{32} (1931), no.~3,  485--511.



\end{thebibliography}
\end{document}